\documentclass[a4paper]{amsart}
\usepackage{amsmath,amsthm,amssymb,amscd}
\usepackage[arrow,matrix]{xy}
\usepackage{enumerate}
\usepackage{mathrsfs}

\theoremstyle{plain}
\numberwithin{equation}{section}
\newtheorem{thm}{Theorem}[section]

\newtheorem{prop}[thm]{Proposition}
\newtheorem{cor}[thm]{Corollary}
\newtheorem{lem}[thm]{Lemma}
\theoremstyle{definition}
\newtheorem{dfn}[thm]{Definition}
\newtheorem{exm}[thm]{Example}

\newtheorem{rem}[thm]{Remark}

\def\dim{\mathop{\mathrm{dim}}\nolimits}
\def\Lie{\mathop{\mathrm{Lie}}\nolimits}

\def\Ad{\mathop{\mathrm{Ad}}\nolimits}

\def\torsor{E_{\sigma}\to\Gamma(\sigma)^{\mathrm{gp}} \backslash D_{\sigma}}

\def\GsDs{\Gamma(\sigma)^{\mathrm{gp}} \backslash D_{\sigma}}
\def\Gs{\Gamma(\sigma)^{\mathrm{gp}}}

\def\Es{E_{\sigma}}

\def\CC{\mathbb{C}}
\def\BB{\mathbf{B}}
\def\QQ{\mathbb{Q}}
\def\RR{\mathbb{R}}
\def\ZZ{\mathbb{Z}}
\def\PP{\mathbb{P}}
\def\Gr{\mathrm{Gr}}

\def\calB{\mathcal{B}}

\def\calM{\mathcal{M}}

\def\cl{\mathrm{cl}}

\def\frakg{\mathfrak{g}}

\def\frakh{\mathscr{H}}

\def\nn{\mathbf{n}}
\def\hh{\mathbf{h}}

\def\Re{\mathop{\mathrm{Re}}\nolimits}
\def\Im{\mathop{\mathrm{Im}}\nolimits}
\def\bs{\backslash}
\def\gp{\mathrm{gp}}

\def\Aut{\mathop{\mathrm{Aut}}\nolimits}

\def\Span{\mathrm{span}}

\def\even{\mathrm{ev}}
\def\ev{\mathrm{ev}}
\def\odd{\mathrm{od}}
\def\od{\mathrm{od}}
\def\tr{\mathrm{tor}}
\def\Sat{S}
\def\scrH{\mathscr{H}}
\def\l{\left}
\def\r{\right}
\def\rt-m{\sqrt{m}}
\def\00{\mathbf{0}}
\def\Ker{\mathop{\mathrm{Ker}}\nolimits}

\begin{document}
\title[Boundary of cycle spaces]{Boundaries of cycle spaces\\ and degenerating Hodge structures}
\author[T.~Hayama]{Tatsuki Hayama}
\address{Department of Mathematics, National Taiwan University, Taipei 106, Taiwan}
\curraddr{Mathematical Sciences Center, Tsinghua University, Haidian District, Beijing 100084, China}
\email{tatsuki@math.tsinghua.edu.cn}
\thanks{Supported by National Science Council of Taiwan}
\date{\today}
\subjclass[2000]{32G20, 14D07.}
\keywords{degenerating Hodge structure; partial compactification of period domain; cycle space}
\begin{abstract}
We study a property of cycle spaces in connection with degenerating Hodge structures of odd-weight, and construct maps from some partial compactifications of period domains to the Satake compatifications of Siegel spaces.
These maps are a generalization of the maps from the toroidal compactifications of Siegel spaces to the Satake compactifications.
We also show the continuity of these maps for the case for the Hodge structure of Calabi-Yau threefolds with $h^{2,1}=1$.    
\end{abstract}
\maketitle
\section{Introduction}
A cycle space is a space which parametrizes compact manifolds of a domain in a flag manifold.
This is studied in the field of complex geometry (e.g. Fels, Huckleberry and Wolf \cite{FHW}).
Our aim is to study period domains for pure Hodge structures by using the theory of cycle spaces and apply it to the study of degenerating Hodge structures.
To be more specific we aim to study the partial compactifications of period domains introduced by Kato and Usui \cite{KU}.
We will show results about the partial compactifications for some degenerations by using a property of cycle spaces.
Especially for the case for the Hodge structures of Calabi-Yau threefolds with $h^{2,1}=1$ we will show an explicit calculation and a further result for this case.

\subsection{Period domains and the partial compactifications}
By the classical works of Griffiths \cite{G}, a variation of Hodge structure gives a horizontal map to the period domain called the period map . 
Kato and Usui \cite{KU} constructed partial compactifications so that period maps can be extended for some degenerating Hodge structures.
They showed these partial compactifications are moduli spaces of some degenerating Hodge structures called {\it log Hodge structures}.
If we consider the Hodge structures of curves or K3 surface, period domains are Hermitian symmetric domains.
For Hermitian symmetric domains there are several ways to make compactifications.
Especially toroidal partial compactifications \cite{AMRT} are coincide with the partial compactifications of \cite{KU} if period domains are Hermitian symmetric.
These partial compactifications are given by fans, then the properties of fans have an effect on the properties of geometry on the partial compactifications.
\cite{AMRT} gave the constructions of fans which give compactifications. 

For a general period domain, \cite{KU} showed fundamental geometric properties of the partial compactifications of period domains using log geometry.
The partial compactifications are not analytic spaces but analytic spaces with slits called {\it log manifolds}. 
In contrast with Hermitian symmetric case we do not have a construction of fans to make compactifications, and it is expected not to exist such fans, if period domains are not Hermitian symmetric.
Moreover we do not have a construction of a {\it complete} fan \cite[Definition 12.6.1]{KU}.
Then the construction problem of fans is still remained.

In this paper we will discuss about the partial compactifications related to the special fans for period domains for odd-weight Hodge structures.
We will show that some properties on these special partial compactifications. 
\subsection{Cycle spaces}
We review cycle spaces briefly following \cite{FHW}.
Let $D$ be a period domain.
Then $D$ is an open orbit of the flag manifold $\check{D}$ called the compact dual.
Here the real Lie group $G$ act on $D$ transitively.
Fixing a base point $F_0$ of $D$, the isotropy subgroup $L_0$ is compact, which is maximally compact if, and only if, $D\cong G/ L_0$ is a Hermitian symmetric domain.
Taking the maximally compact subgroup $K_0$ containing $L_0$ we have the orbit $C_0=K_0F_0$, which is a compact submanifold contained in $D$.
The cycle space $\calM_{D}$ is a set of all $gC_0$ with $g\in G_{\CC}$ which is contained in $D$.

In this paper we treat $D$ for odd-weight Hodge structures.
By shifting, we may assume the weight is $-1$.
In this case we have
\begin{align}\label{groups}
G\cong Sp(n,\RR),\quad L_0\cong \prod_{j\geq 0} U(n_j),\quad K_0\cong U(n),\quad C_0\cong K_0/L_0
\end{align}
where $\sum_jn_j=n$ and $\{n_j\}_j$ depends on the Hodge numbers (see \cite{CMP}).
Here $G/K_0$ is isomorphic to the Siegel space $\scrH$ of degree $n$ and we have the real analytic map
$$D\cong G/L_0\to G/K_0\cong\scrH.$$
Through this map, a $\ZZ$-Hodge structure whose Hodge filtration is in $D$ is corresponding to an abelian variety, which coincide with the Weil intermediate Jacobian.
Now $K_0=L_0$ if, and only if, $h^{p,-p-1}=0$ for $p\neq 0,1$ (the case for Hodge structures of curves).
Moreover $\calM_D\cong \scrH$ if  $K_0=L_0$.

We are mainly interested in the case where $L_0\neq K_0$, i.e. $D$ is not a symmetric space.
In this case the cycle space can be written by $\calM_D\cong \scrH\times \bar{\scrH}$ where $\bar{\scrH}$ is its complex conjugate by \cite{FHW}.
Since $\scrH$ is a familiar object, $\calM_D$ is easier to study than $D$ itself. 
In our last paper \cite{H} we show some properties of $\calM_D$ in connection with $SL(2)$-orbits.
In this paper we will give a generalization of this result in Proposition \ref{cyc-bd}.

\subsection{Degenerating Hodge structures}
We may regard a nilpotent orbit as a degenerating Hodge structure by Schmid \cite{S} and Cattani, Kaplan and Schmid \cite{CKS}.
A nilpotent orbit is given by a rational nilpotent cone of $\Lie{G}$ and an orbit in $\check{D}$ defined by this nilpotent cone.
Boundary points of the partial compactifications of period domains are corresponding to nilpotent orbits.
   
Let $\sigma$ be a nilpotent cone, and let $\BB(\sigma)$ be the set of all $\sigma$-nilpotent orbits.
For a fan $\Sigma$ of nilpotent cones we denote by $D_{\Sigma}$ the union of $\BB(\sigma)$ for $\sigma\in \Sigma$, where the component $\BB(\{0\})$ for the $0$-cone is $D$.
Taking a subgroup $\Gamma$ of the discrete group $G_{\ZZ}$ which is compatible with $\Sigma$, $\Gamma\bs D_{\Sigma}$ is the partial compactification related to $\Sigma$. 
We will define an even-type (resp. odd-type) nilpotent cone $\sigma$ in Definition \ref{ev-od} and the Satake boundary component $\BB_S(\sigma)$ of $\scrH$ corresponding to $\sigma$.
By using the properties of cycle spaces (Proposition \ref{cyc-bd}) we will construct a map
$$p^{\ev}:\BB(\sigma)\to \BB_{S}(\sigma),\quad (\text{resp. }p^{\od}:\BB(\sigma)\to\overline{\BB_{S}(\sigma)}).$$
If $D$ is a Siegel space, $p^{\ev}$ coincides with the map $\zeta$ given by \cite{CCK} (see Example \ref{siegel-exm}).
\begin{thm}\label{main1}
Let $\Sigma$ be an even-type (resp. odd-type) fan (Definition \ref{ev-od}), and let $\Gamma$ be a subgroup of $G_{\ZZ}$ which is compatible with $\Sigma$.
Then we have the map $p^{\ev}$ (resp. $p^{\od}$) from the partial compactification $\Gamma\bs D_{\Sigma}$ defined by Kato and Usui \cite{KU} to the Satake compactification $\Gamma\bs \scrH_S$ (resp. $\overline{\Gamma\bs \scrH}_S$).
If $D$ is a Siegel space, $p^{\ev}$ coincides with the map from the toroidal partial compactification to the Satake compactification. 
\end{thm}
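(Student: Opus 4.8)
The plan is to glue the pointwise maps supplied by Proposition \ref{cyc-bd} over the fan $\Sigma$ and then to descend the result to the quotients. As a set, $D_{\Sigma}=\bigsqcup_{\sigma\in\Sigma}\BB(\sigma)$ with $\BB(\{0\})=D$, while the Satake space decomposes as $\scrH_{S}=\scrH\sqcup\bigsqcup_{\tau}\BB_{S}(\tau)$ into rational boundary components. On the open stratum $D$ the map is the real analytic projection $D\cong G/L_{0}\to G/K_{0}\cong\scrH\hookrightarrow\scrH_{S}$, read off from the cycle-space identification $\calM_{D}\cong\scrH\times\overline{\scrH}$ of \cite{FHW} via the $\scrH$-factor (resp. via the conjugate factor $\overline{\scrH}$ for $p^{\od}$); on each boundary stratum $\BB(\sigma)$ it is the map $p^{\ev}$ (resp. $p^{\od}$). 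The first task is to collect these into a single set map $p^{\ev}\colon D_{\Sigma}\to\scrH_{S}$ (resp. $p^{\od}\colon D_{\Sigma}\to\overline{\scrH}_{S}$).

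The crux is compatibility along the face structure of $\Sigma$. For a face $\tau\prec\sigma$ one has $\BB(\sigma)\subset\cl(\BB(\tau))$ in $D_{\Sigma}$, and the corresponding incidence among Satake boundary components is $\BB_{S}(\sigma)\subset\cl(\BB_{S}(\tau))$. I would check that the construction of Proposition \ref{cyc-bd} intertwines these two closure relations: passing from $\tau$ to $\sigma$ by enlarging the nilpotent cone enlarges the monodromy weight filtration, and since the limit of the cycle in $\calM_{D}$ is governed by this weight filtration through the $SL(2)$-orbit analysis of \cite{CKS}, the induced point of the Satake compactification degenerates into the deeper stratum exactly as prescribed. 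The even-type (resp. odd-type) hypothesis of Definition \ref{ev-od} is precisely what guarantees, uniformly in $\sigma$, that the image lies in $\BB_{S}(\sigma)$ through the factor $\scrH$ (resp. in $\overline{\BB_{S}(\sigma)}$ through the conjugate factor $\overline{\scrH}$); this is where the two cases diverge.

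Next I would establish $G_{\ZZ}$-equivariance. An element $\gamma\in G_{\ZZ}\subset Sp(n,\ZZ)$ acts on $D_{\Sigma}$ by $(\sigma,F)\mapsto(\Ad(\gamma)\sigma,\gamma F)$ and on $\scrH_{S}$ through its image in $\Aut(\scrH)$; because both the projection $D\to\scrH$ and the cycle space $\calM_{D}\cong\scrH\times\overline{\scrH}$ are $G$-equivariant by \cite{FHW}, the glued maps are $G_{\ZZ}$-equivariant. Compatibility of $\Gamma$ with $\Sigma$ then aligns the two stratifications, so $p^{\ev}$ (resp. $p^{\od}$) descends to a well-defined map $\Gamma\bs D_{\Sigma}\to\Gamma\bs\scrH_{S}$ (resp. $\overline{\Gamma\bs\scrH}_{S}$).

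For the Siegel case one has $L_{0}=K_{0}$, so $C_{0}$ is a point, $\calM_{D}\cong\scrH$, and $D\cong G/L_{0}=G/K_{0}\cong\scrH$; the Kato--Usui space $\Gamma\bs D_{\Sigma}$ is then the toroidal partial compactification of \cite{AMRT}, as recalled in the introduction. Here the cycle-space recipe for $p^{\ev}$ collapses to the classical assignment of the limit abelian variety (the Weil intermediate Jacobian) to a nilpotent orbit, which is exactly the map $\zeta$ of \cite{CCK}; I would record this identification through Example \ref{siegel-exm}. The step I expect to be the main obstacle is the gluing of the second paragraph: showing that the cycle-space limit in $\scrH\times\overline{\scrH}$ projects consistently to the prescribed Satake stratum for every incident pair of cones, and in particular pinning down the even versus odd dichotomy so that the target is the component $\BB_{S}(\sigma)$ (resp. $\overline{\BB_{S}(\sigma)}$) dictated by Definition \ref{ev-od} rather than some shallower or deeper stratum.
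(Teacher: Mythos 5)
Your skeleton (stratum-by-stratum definition, $\Gamma$-equivariance, descent, and the Siegel case via Example \ref{siegel-exm}) matches the paper's, but you have skipped the step that actually carries the weight of the proof and substituted a different one that the theorem does not require. The point of the construction is that the boundary value $p^{\ev}(\exp(X)C_0)$ is built from a \emph{choice}: one picks $N\in\sigma^{\circ}$ and a representative $F$ of the orbit $\exp(\sigma_{\CC})F$, passes to the $\RR$-split filtration $\hat{F}=e^{-i\delta}F$, forms the $SL(2)$-triple and $X=\tfrac12(iN-H+iN^{+})$, and only then applies Proposition \ref{cyc-bd}. For the map $\BB(\sigma)\to\BB_{S}(\sigma)$ to exist at all, one must show the resulting point of $\calB^{\cl}$ is independent of these choices; this is Theorem \ref{cyc-sigma} in the paper, proved by the explicit computation $e^{X}H^{\ev}=\bigoplus_{p\,\mathrm{even}}H^{p,-p-1}_{1}\oplus(\Im N)_{\CC}$, where $\Im N=W(\sigma)_{-2}$ depends only on $\sigma$ and the components $H^{p,-p-1}_{1}=I^{p,-p-1}$ are annihilated by $\sigma_{\CC}$ and by $L^{-1,-1}_{\RR}(W(\sigma),F)$. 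This is exactly where the even/odd hypothesis enters (it kills the $H_{3}$-components so that $e^{X}H^{\ev}$ collapses onto data intrinsic to the orbit). Your proposal treats ``the map $p^{\ev}$ on $\BB(\sigma)$'' as already given and never addresses this independence, so the map is not yet well defined; that is a genuine gap.

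Conversely, the ``crux'' you identify --- intertwining the closure relations $\BB(\sigma)\subset\cl(\BB(\tau))$ and $\BB_{S}(\sigma)\subset\cl(\BB_{S}(\tau))$ for faces $\tau\prec\sigma$ --- is not needed for this theorem. $D_{\Sigma}$ is a disjoint union of strata and the statement only asserts the existence of a ($\Gamma$-equivariant) map of sets; no continuity is claimed here, and indeed the paper establishes continuity only later, and only in the $(1,1,1,1)$-case, by a delicate analysis of the strong topology on $E_{\sigma}$. So the face-compatibility argument you sketch is both unnecessary for the statement and far from routine if one did want it in general. The equivariance and Siegel-case portions of your proposal are fine and agree with the paper (the latter reducing to $\zeta$ of \cite{CCK} as in Example \ref{siegel-exm}).
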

Moreover an even-type (resp. odd-type) cone $\sigma$ gives a nilpotent orbit for $\scrH$ (resp. $\bar{\scrH}$).
We denote by $\BB_{\tr}(\sigma)$ the boundary component of $\scrH_{\sigma}$.
We will define the map
$$\tilde{p}^{\ev}:\BB(\sigma)\to \BB_{\tr}(\sigma)\quad (\text{resp. }\tilde{p}^{\od}:\BB(\sigma)\to \overline{\BB_{\tr}(\sigma)}).$$
If $D$ is a Siegel space, $\tilde{p}^{\ev}$ is the identity.
Then we will obtain the following theorem:
\begin{thm}
Let $\Sigma$ be an even-type (resp. odd-type) fan, and let $\Gamma$ be a subgroup of $G_{\ZZ}$ which is compatible with $\Sigma$.
Then we have the map $\tilde{p}^{\ev}:\Gamma\bs D_{\Sigma}\to\Gamma\bs\scrH_{\Sigma}$ (resp.  $\tilde{p}^{\od}: \Gamma\bs D_{\Sigma}\to\overline{\Gamma\bs\scrH}_{\Sigma}$), which factors through the map of Theorem \ref{main1}.
If $D$ is a Siegel space, $\tilde{p}^{\ev}$ is the identity.
\end{thm}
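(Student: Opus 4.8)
The plan is to build $\tilde{p}^{\ev}$ stratum by stratum, reusing the construction behind Theorem \ref{main1}, and then to show that the only new ingredient is the replacement of the Satake target by the finer toroidal one. First I would fix an even-type cone $\sigma$ in the fan $\Sigma$. By Definition \ref{ev-od} together with the cycle-space decomposition $\calM_D\cong\scrH\times\bar{\scrH}$, the cone $\sigma$ is simultaneously a nilpotent cone for the Siegel space $\scrH$, so it determines the Kato--Usui partial compactification $\scrH_{\sigma}$ together with its boundary component $\BB_{\tr}(\sigma)$. Using Proposition \ref{cyc-bd}, I would send a $\sigma$-nilpotent orbit in $\BB(\sigma)\subset D_{\Sigma}$ to the $\sigma$-nilpotent orbit for $\scrH$ read off from the $\scrH$-component of its limiting cycle; this is exactly the map $\tilde{p}^{\ev}:\BB(\sigma)\to\BB_{\tr}(\sigma)$ announced before the statement. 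The odd-type case is identical after replacing the $\scrH$-component by the $\bar{\scrH}$-component, which accounts for the conjugate target $\overline{\Gamma\bs\scrH}_{\Sigma}$.

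The next step is to assemble these stratum-wise maps into a morphism on the whole partial compactification and to descend it to the $\Gamma$-quotient. I would verify that if $\tau$ is a face of $\sigma$ then the two prescriptions agree on $\BB(\tau)$, so that the maps glue to a well-defined $\tilde{p}^{\ev}\colon D_{\Sigma}\to\scrH_{\Sigma}$; on the open stratum $\BB(\{0\})=D$ this is simply the cycle-space projection to $\scrH$. Because the even-type condition, Proposition \ref{cyc-bd}, and the projection $\calM_D\cong\scrH\times\bar{\scrH}$ are all $G$-equivariant, the resulting map is $\Gamma$-equivariant for the natural action of $\Gamma\subset G_{\ZZ}$ on $\scrH$, and hence descends to $\tilde{p}^{\ev}\colon\Gamma\bs D_{\Sigma}\to\Gamma\bs\scrH_{\Sigma}$.

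To prove the factorization I would invoke the classical canonical projection $\pi\colon\Gamma\bs\scrH_{\Sigma}\to\Gamma\bs\scrH_S$ from the toroidal to the Satake compactification (\cite{AMRT}, and \cite{CCK} in the Siegel case of Example \ref{siegel-exm}), whose restriction to the $\sigma$-stratum is the collapsing map $\BB_{\tr}(\sigma)\to\BB_{S}(\sigma)$. On each stratum the composite $\pi\circ\tilde{p}^{\ev}$ and the map $p^{\ev}$ of Theorem \ref{main1} are both extracted from the same limiting-cycle datum, $p^{\ev}$ being by construction the image of that datum in the Satake boundary; comparing the two definitions on $\BB(\sigma)$ yields $\pi\circ\tilde{p}^{\ev}=p^{\ev}$, which is the asserted factorization. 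Finally, when $D$ is a Siegel space we have $L_0=K_0$, whence $D\cong\scrH$ and $\calM_D\cong\scrH$; an even-type cone is then literally a nilpotent cone for $\scrH$, the compactifications $\Gamma\bs D_{\Sigma}$ and $\Gamma\bs\scrH_{\Sigma}$ coincide, and $\tilde{p}^{\ev}$ is tautologically the identity.

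I expect the main obstacle to be the gluing step: checking that the stratum-wise maps are compatible with the face relations of $\Sigma$ and that the resulting map lands in the toroidal compactification cut out by the very same fan, rather than by a strictly finer one. This amounts to tracking how the weight filtration and the limiting cycle of a $\sigma$-nilpotent orbit behave as $\sigma$ specializes to its faces, and to verifying that the even-type hypothesis of Definition \ref{ev-od} is inherited by faces, so that $\BB_{\tr}(\sigma)$ and hence $\tilde{p}^{\ev}$ are defined consistently across all of $\Sigma$.
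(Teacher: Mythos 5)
There is a genuine gap at the heart of your construction: the map $\tilde{p}^{\ev}$ cannot be ``read off from the $\scrH$-component of the limiting cycle.'' The limiting cycle of a $\sigma$-nilpotent orbit is the single point $\exp{(X)}C_0\in\calM_{D}^{\cl}$, and its $\calB^{\cl}$-component is by construction the Satake boundary point $p^{\ev}(\sigma,\exp{(\sigma_{\CC})}F)\in\BB_{S}(\sigma)$ --- indeed Theorem \ref{cyc-sigma} shows that $e^{X}H^{\ev}=W(\sigma)_{-2}\oplus\bigoplus_{p\,\mathrm{even}}I^{p,-p-1}$ depends only on data that are constant along the fibres of $\zeta$. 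So your prescription recovers only $p^{\ev}$, not the finer toroidal invariant; two nilpotent orbits with the same cone can have the same limiting cycle yet different images in $\BB_{\tr}(\sigma)$. What is actually needed, and what the paper supplies, is the explicit filtration $\tilde{F}\in\check{\scrH}$ with $\tilde{F}^{0}=(\bigoplus_{p\,\mathrm{even}}I^{p,-p-1})\oplus(\bigoplus_{p}I^{p,-p})$ built from the Deligne splitting of $(W(\sigma),\hat{F})$, together with two verifications you omit: (i) that $(W(\sigma),\tilde{F})$ is a polarized limiting mixed Hodge structure for $\scrH$ (resp.\ $\bar{\scrH}$) --- this is where the even/odd hypothesis enters through the sign $i^{2p}=\pm 1$ in the polarization of $\Gr^{W(\sigma)}_{0}$, and it is the reason the odd-type target must be the conjugate $\bar{\scrH}$ rather than $\scrH$; and (ii) Proposition \ref{p-tilde}, which identifies $p^{\ev}(e^{zN}\hat{F})=e^{zN}\tilde{F}$ and thereby both justifies calling $(\sigma,\exp{(\sigma_{\CC})}\tilde{F})$ the toroidal limit of the orbit and yields the factorization $\zeta\circ\tilde{p}^{\ev}=p^{\ev}$ by a direct comparison of the two formulas. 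Without $\tilde{F}$ the map is simply not defined, and the factorization statement has nothing to factor.

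Two smaller points. The issue you flag as the main obstacle --- whether the even-type condition passes to faces and whether the stratum-wise maps glue --- is a non-issue here: an even-type fan is \emph{defined} as one all of whose faces are even-type, and $D_{\Sigma}=\bigsqcup_{\sigma\in\Sigma}\BB(\sigma)$ is a disjoint union on which the map is given orbit by orbit, so no gluing compatibility beyond $\Gamma$-equivariance needs checking. Your treatment of the Siegel case and of the descent to the $\Gamma$-quotient is consistent with the paper.
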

Finally we have the following commutative diagrams:
\begin{align*}
\xymatrix{
\Gamma\bs D_{\Sigma_{\ev}}\ar@{->}[dr]_{p^{\ev}}\ar@{->}[r]^{\tilde{p}^{\ev}} & \Gamma\bs \scrH_{\Sigma_{\ev}}\ar@{->}[d]^{\zeta}\\
&\Gamma\bs\scrH_{\Sat}
}\quad
\xymatrix{
\Gamma\bs D_{\Sigma_{\od}}\ar@{->}[dr]_{p^{\od}}\ar@{->}[r]^{\tilde{p}^{\od}} & \overline{\Gamma\bs \scrH}_{\Sigma_{\od}}\ar@{->}[d]^{\bar{\zeta}}\\
&\overline{\Gamma\bs\scrH}_{\Sat}
}
\end{align*}
where $\Sigma_{\ev}$ is an even-type fan and $\Sigma_{\od}$ is an odd-type fan.

\subsection{The (1,1,1,1)-case}
We will study these maps in detail for the case of the Hodge structures of Calabi-Yau threefolds with $h^{2,1}=1$.
This case is geometrically corresponding to the quintic mirror family or the Borcea-Voisin mirror family (see \cite{GGK}), and the nilpotent orbits are explicitly described and classified by Kato and Usui \cite{KU}.
Then we know the constructions of the fans $\Sigma_{\ev}$ and $\Sigma_{\od}$ and these nilpotent orbits in this case.  
By using it we will describe the maps $p^{\ev},p^{\od},\tilde{p}^{\ev}$ and $\tilde{p}^{\od}$ and show the following proposition:
\begin{prop}
In this case, $\tilde{p}^{\ev}$ and $\tilde{p}^{\od}$ (therefore $p^{\ev}$ and $p^{\od}$) are continuous.
\end{prop}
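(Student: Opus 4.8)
The plan is to reduce to continuity at the boundary points and to verify it in the explicit local charts of the Kato--Usui construction. From the commutative diagrams we have $p^{\ev}=\zeta\circ\tilde p^{\ev}$ and $p^{\od}=\bar\zeta\circ\tilde p^{\od}$, where $\zeta$ and $\bar\zeta$ are the classical continuous maps from the toroidal to the Satake compactification; hence it suffices to prove continuity of $\tilde p^{\ev}$ and $\tilde p^{\od}$, which yields the parenthetical assertion for $p^{\ev}$ and $p^{\od}$. On the open stratum $D$ both maps are the real-analytic fibration $G/L_0\to G/K_0\cong\scrH$ (resp. its complex conjugate $G/L_0\to\bar\scrH$), so continuity there is automatic, and everything reduces to continuity at points of $\BB(\sigma)$ for $\sigma$ in $\Sigma_{\ev}$ (resp. $\Sigma_{\od}$) in the strong topology of $\Gamma\bs D_\Sigma$.

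First I would fix a boundary point and pass to a local chart of the partial compactification (the torsor $\torsor$). There a point of $D$ near the boundary is written as $\exp(zN)F$ with $N\in\sigma$, $F\in\check D$ and $\Im z\to\infty$, i.e. $q=e^{2\pi iz}\to 0$ (with the evident modification for higher-rank cones), and convergence to the boundary stratum is exactly convergence of these data. In the $(1,1,1,1)$-case the nilpotent cones and their orbits have been classified explicitly by Kato and Usui, so for each of the finitely many types of $\sigma$ I would write $\exp(zN)F$ in coordinates, compute its image under the projection to $\scrH$---the period matrix of the associated Weil intermediate Jacobian---and read off the limit as $q\to 0$. The target limit is the prescribed point of $\BB_\tr(\sigma)\subset\scrH_\Sigma$ (resp. $\overline{\BB_\tr(\sigma)}\subset\overline{\scrH}_\Sigma$), and the Satake-side statement then follows by composing with $\zeta$ (resp. $\bar\zeta$).

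The hard part will be that the projection $D\to\scrH$ is not holomorphic: it mixes $z$ and $\bar z$ through the Hodge splitting, so existence and correctness of the limit as $q\to 0$ is not a formal consequence of holomorphic extension. To control the asymptotics I would invoke Proposition \ref{cyc-bd}, which describes the boundary behaviour of the cycle-space projection $\calM_D\cong\scrH\times\bar\scrH$ along $SL(2)$-orbits; together with the nilpotent orbit theorem this pins down the leading term of the period matrix and shows that the non-holomorphic contributions decay in the direction of degeneration. The remaining technical point is compatibility with the strong topology: one must treat an arbitrary sequence converging in $\Gamma\bs D_\Sigma$, not merely one running along a single $SL(2)$-orbit, and show that its image converges in $\Gamma\bs\scrH_\Sigma$. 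Here the explicit form of the finitely many cones in the $(1,1,1,1)$-case makes the required estimate tractable, and one concludes continuity of $\tilde p^{\ev}$ and $\tilde p^{\od}$, hence of $p^{\ev}$ and $p^{\od}$.
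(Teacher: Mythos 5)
Your plan follows essentially the same route as the paper: reduce to continuity at boundary points, work in the chart $E_{\sigma}\subset\check{D}\times\CC$ with explicit coordinates for each of the (finitely many, classified) cone types, compute the image in $\scrH$ directly, and control the non-holomorphic terms by using that the strong-topology neighborhoods $U_n(\delta_n)$ force $|z_4|\log|z_5|\to 0$ (resp. $|z_1|\log|z_5|,|z_2|\log|z_5|\to 0$), which is exactly the estimate the paper carries out. The only cosmetic difference is that the paper does not re-invoke Proposition \ref{cyc-bd} at this stage but verifies the convergence of the Hodge-decomposition generators $\eta_0(z),\eta_1(z),\alpha(z)$ by hand; your sketch correctly identifies every point where the real work lies.
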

Here the topology of $\Gamma\bs D_{\Sigma}$ is the strong topology, which makes these continuous.
Remark that the even-types and the odd-types are not parallel.
In fact $p^{\od}$ is not surjective although $p^{\ev}$ is and $\Sigma_{\od}$ is not a part of a fan of a toroidal compactification although $\Sigma_{\ev}$ is.
Especially the following diagram holds for the even-case (see Remark \ref{rem-ev}):
\begin{align*}
\xymatrix{
\Gamma\bs D_{\Sigma_{\ev}}\ar@{->}[dr]_{p^{\ev }}\ar@{->}[r]^{\tilde{p}^{\ev}} & \Gamma\bs \scrH_{\Sigma_{\ev}}\ar@{->}[d]\ar@{->}[r]^{\subset}& \Gamma\bs \scrH_{\Sigma_{\tr}}\ar@{->}[ld]^{\zeta}\\
&\Gamma\bs\scrH_{\Sat}
}
\end{align*}
where $\Gamma\bs \scrH_{\Sigma_{\tr}}$ is a toroidal compactification of the Siegel space $\scrH$ of degree $2$.

\subsection{Further problems}
How to construct $\Sigma_{\ev}$ and $\Sigma_{\od}$ and the property of $p^{\ev},p^{\od},\tilde{p}^{\ev}$ and $\tilde{p}^{\od}$ beyond the above case is unknown.
We expect that $p^{\ev}$ and $p^{\od}$ have a good geometric property similar to the Siegel case. 
Since the toroidal compactifications and the Satake compactifications are well known, we expect that these maps are helpful to study the geometry of $\Gamma\bs D_{\Sigma_{\ev}}$ and $\Gamma\bs D_{\Sigma_{\od}}$.

On the other hand Green, Griffiths and Kerr \cite{GGK2} recently study Mumford-Tate domains.
Kerr and Pearlstein \cite{KP} investigate a relationship between boundary components of Mumford-Tate domains and the Kato-Usui boundary components.
I expect that our study of Kato-Usui boundary components can be applied to the study of boundary components of Mumford-Tate domains.  
\section*{Acknowledgment}
This work was done during a visit of the author to Johns Hopkins University for the activity of JAMI in February 2012. The author is grateful for the hospitality and the support.
The author is thankful to professors Radu Laza, Gregory Pearlstein and Steven Zucker for their valuable advice and warm encouragement. 
 




\section{Even-type and odd-type Degenerations}
Let $(H_{\ZZ},F_0,\langle\; ,\; \rangle)$ be a polarized Hodge structure of weight $-1$ where $H_{\ZZ}$ is $\ZZ$-module, $F_0$ is a filtration of $H_{\CC}:=H_{\ZZ}\otimes \CC$ and $\langle\; ,\; \rangle$ is a non-degenerate alternating bilinear form on $H_{\ZZ}$.
We have the period domain $D$ and its compact dual $\check{D}$ by \cite{G}.
Here $D$ is written as a homogeneous space on which the real Lie group $G$ acts.
As in (\ref{groups}), $G\cong Sp(n,\RR)$ and the isotropy subgroup $L_0$ is isomorphic to $\prod_{j\geq 0} U(n_j)$ where $n_j$ is the Hodge number $h^{j,-j-1}$.
We write $\Lie{G}=\frakg$.
\subsection{$\RR$-split LMHS with $N^2=0$}
Let $N\in\frakg$ be a nilpotent with $N^2=0$ and let $F\in\check{D}$. 
We have the monodromy weight filtration $W(N)$ such that
$$W(N)_0=H_{\RR},\quad W(N)_{-1}=\Ker{N},\quad W(N)_{-2}=\Im{N}$$
(which is shifted by $-1$ from the original definition of the weight filtration).
We say $(W(N),F)$ is a limiting mixed Hodge structure (LMHS) if the following properties hold:
\begin{itemize}
\item $(W(N),F)$ is a mixed Hodge structure;
\item $N:\Gr^{W(N)}_0\stackrel{\sim}{\to}\Gr^{W(N)}_{-2}$ gives a $(-1,-1)$-morphism of Hodge structure;
\item $\langle\bullet ,N^j\bullet\rangle$ gives a polarization for $\Gr^{W(N)}_{j-1}$ ($j=0,1$).
\end{itemize}
Now we have the Deligne decomposition $H_{\CC}=\bigoplus_{p+q=0,-1,-2} I^{p,q}$ letting
$$I^{p,q}=F^{p}\cap W(N)_{p+q ,\CC}\cap(\bar{F}^q\cap W(N)_{p+q,\CC}+\sum_{j\geq 2}\bar{F}^{q-j+1}\cap W(N)_{p+q-j,\CC}),$$
which gives Hodge decomposition on each graded quotient part.
We assume that the LMHS is $\RR$-split, i.e. the Deligne decomposition is defined over $\RR$.
By \cite[Lemma 3.12]{CKS}, $\exp{(zN)}F\in D$ for $\Im{z}>0$.
\begin{lem}\label{exp-zN}
Let $v\in I^{p,-p}$.
For $z\in \CC$ with $\Im{z}>0$, $\exp{(zN)}v$ is in the $(p,-p-1)$-component of the Hodge decomposition with respect to $\exp{(zN)}F$.
\end{lem}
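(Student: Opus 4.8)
The plan is to read off the $(p,-p-1)$-component of the Hodge decomposition attached to the filtration $\tilde F:=\exp(zN)F$ and to exploit the compatibility of the Deligne bigrading $\{I^{p,q}\}$ with both $F$ and its conjugate. Since $\Im z>0$, the cited Lemma 3.12 of \cite{CKS} guarantees $\tilde F\in D$, so $\tilde F$ is a genuine polarized Hodge structure of weight $-1$ and its $(p,-p-1)$-component equals $\tilde F^p\cap\overline{\tilde F^{-p-1}}$. It therefore suffices to show that $\exp(zN)v$ lies in both $\tilde F^p$ and $\overline{\tilde F^{-p-1}}$.

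For the first membership I would use only the defining property that $F^p$ is the sum of the $I^{r,s}$ with $r\geq p$. Since $v\in I^{p,-p}$ has first index $p$, already $v\in F^p$, and hence $\exp(zN)v\in\exp(zN)F^p=\tilde F^p$ with no computation at all.

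The second membership carries the actual content. Because $N$ is real, conjugation gives $\overline{\tilde F^{-p-1}}=\exp(\bar z N)\,\overline{F^{-p-1}}$, so the claim reduces, using $N^2=0$, to $\exp((z-\bar z)N)v=v+2i(\Im z)\,Nv\in\overline{F^{-p-1}}$. In the $\RR$-split case $\overline{I^{r,s}}=I^{s,r}$, so $\overline{F^{-p-1}}$ is the sum of those $I^{r,s}$ with \emph{second} index $s\geq -p-1$. Now $v\in I^{p,-p}$ has second index $-p\geq -p-1$, and since $N$ is a $(-1,-1)$-morphism of the bigrading we have $Nv\in I^{p-1,-p-1}$, whose second index $-p-1$ also satisfies $-p-1\geq -p-1$; thus both summands lie in $\overline{F^{-p-1}}$, and so does $v+2i(\Im z)\,Nv$. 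Combining the two memberships places $\exp(zN)v$ in $\tilde F^p\cap\overline{\tilde F^{-p-1}}$, which is the asserted component.

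The steps needing care, rather than genuine obstacles, are the following. First, one must verify that $N$ shifts the Deligne bidegree by $(-1,-1)$ on the $\RR$-split LMHS, so that $Nv\in I^{p-1,-p-1}$; this follows from $NF^p\subseteq F^{p-1}$, $NW_k\subseteq W_{k-2}$ and $\overline{I^{r,s}}=I^{s,r}$. Second, one must track consistently that $F^\bullet$ is cut out by the first index while $\overline{F^\bullet}$ is cut out by the second, which is exactly what makes the two inclusions go through for the single vector $\exp(zN)v$. Finally, I would emphasize that the hypothesis $\Im z>0$ enters only to ensure $\tilde F\in D$, so that the Hodge decomposition is defined; the algebraic inclusions above in fact hold for every $z$.
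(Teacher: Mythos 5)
Your proof is correct and follows essentially the same route as the paper: both establish $\exp(zN)v\in\exp(zN)F^p$ trivially and then verify membership in $\overline{\exp(zN)F^{-p-1}}$ using that $v$ and $Nv$ (equivalently $\bar v$ and $N\bar v$) sit in the right filtration pieces of the $\RR$-split Deligne bigrading. Your reduction via factoring out $\exp(\bar zN)$ to the single inclusion $v+2i(\Im z)Nv\in\overline{F^{-p-1}}$ is just a cleaner packaging of the paper's explicit identities.
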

\begin{proof}
Now $\exp{(zN)}v\in \exp{(zN)}F^p$.
Moreover $\bar{v}\in F^{-p}$ and $N\bar{v}\in F^{-p-1}$ since the LMHS is $\RR$-split.
Here
\begin{align*}
&N\bar{v}=\exp{(zN)}(N\bar{v})\in \exp{(zN)}F^{-p-1},\\
&\bar{v}-zN\bar{v}=\exp{(zN)}\bar{v}-2zN\bar{v}\in \exp{(zN)}F^{-p-1}.
\end{align*}
Then
\begin{align*}
\exp{(zN)}v&=v+zNv\\
&=\overline{\bar{v}-zN\bar{v}}+2\Re{(z)}Nv\in \overline{\exp{(zN)}F^{-p-1}}.
\end{align*}
\end{proof}
Let $H_{\CC}=\bigoplus_p H^{p,-p-1}$ be the Hodge decomposition with respect to $\exp{(zN)}F$ for $\Im{z}>0$.
By the above lemma, 
\begin{align*}
&e^{zN}I^{p,-p}\subset H^{p,-p-1},\quad e^{zN}I^{-p,p}\subset H^{-p,p-1},\\
&e^{\bar{z}N}I^{-p,p}=\overline{e^{zN}I^{p,-p}}\subset H^{-p-1,p},\quad e^{\bar{z}N}I^{p,-p}=\overline{e^{zN}I^{-p,p}}\subset H^{p-1,-p}. 
\end{align*}
We write 
\begin{align}\label{decomp_123}
H^{p,-p-1}_1=I^{p,-p-1},\quad H^{p,-p-1}_2=e^{zN}I^{p,-p},\quad H^{p,-p-1}_3=e^{\bar{z}N}I^{p+1,-p-1}.
\end{align}
Then the $(p,-p-1)$-component has the decomposition
\begin{align}\label{p-decomp}
H^{p,-p-1}=H^{p,-p-1}_1\oplus H^{p,-p-1}_2\oplus H^{p,-p-1}_3.
\end{align}
\begin{rem}
By \cite[Lemma 6.24]{S}, $H_{\CC}$ can be decomposed into the direct sum of the subspaces which are invariant and irreducible with respect to the Hodge structure.
Here every irreducible subspace is isomorphic to $H(p)\otimes S(-2p-1)$, with $p\leq -1$, or $E(p,q)\otimes S(-p-q-1)$, with $p+q\leq -1$.
The relationship between this decomposition and the decomposition (\ref{p-decomp}) is written as follows: 
$$H^{p,-p-1}_1\oplus H^{-p-1,p}_1$$
is the direct sum of the $(E(p,-p-1)\otimes S(0))$-type components,
\begin{align*}H^{p,-p-1}_2\oplus H^{p-1,-p}_3\oplus H_2^{-p,p-1}\oplus H_3^{-p-1,p}\\
=I^{p,-p}\oplus I^{-p,p}\oplus I^{p-1,-p-1}\oplus I^{-p-1,p-1}
\end{align*}
is the direct sum of the $(E(p-1,-p-1)\oplus S(1))$-type components for $p\geq 1$, and 
$$H_2^{0,-1}\oplus H^{-1,0}_3=I^{0,0}\oplus I^{-1,-1}$$
is the direct sum of the $(H(-1)\oplus S(1))$-type components.
\end{rem}
\subsection{Cycle spaces and SL(2)-orbits}
Let $(N,F)$ be a pair which generates a LMHS with $N^2=0$. 
By \cite[Proposition 2.20]{CKS} there exists $\delta\in L^{-1,-1}_{\RR}(W(\sigma),F)$ uniquely such that $(W(\sigma),e^{-i\delta}F)$ is a $\RR$-split LMHS.
We write $\hat{F}=e^{-i\delta}F$.
By the SL(2)-orbit theorem (\cite[Theorem 5.13]{S}, \cite[\S 3]{CKS}), there exists a Lie group homomorphism $\rho: SL(2,\CC)\to G_{\CC}$ defined over $\RR$ and a holomorphic map $\phi:\PP^1\to \check{D}$ satisfying the following conditions:
\begin{itemize}\label{sl2}
\item $\rho(g)\phi(z)=\phi(gz)$;
\item $\phi(0)=\hat{F}$;
\item $\rho_*(\nn_-)=N$;
\item $Hv=(p+q+1)v$ for $v\in I^{p,q}$ where $\rho_*(\hh)=H$;
\item $\rho_*:\mathfrak{sl}(2,\CC)\to\frakg_{\CC}$ is a $(0,0)$-morphism of Hodge structure where $\frakg$ (resp. $\mathfrak{sl}(2,\RR)$) has a Hodge structure of weight $0$ relative to $\phi(i)$ (resp. $i$),
\end{itemize}
where $\{\nn_-,\hh,\nn_+\}$ are the standard generators of $sl(2,\CC)$.
We fix $F_0=\phi(i)=\exp{(iN)}\hat{F}$ as a base point of $D$.
We write
$$X=\frac{1}{2}(iN-H+iN^+)$$
where $N^+=\rho_* (\nn_+)$.
Then $X$ is in the $(-1,1)$-component of the Hodge decomposition of $\frakg_{\CC}$ with respect to $F_0$ and $X^2=0$.
Let $H_{\CC}=\bigoplus H^{p,-p-1}$ be the Hodge decomposition with respect to $F_0$.
By Lemma \ref{exp-zN}, for $v\in I^{p,-p}$ we have
$$u=\exp{(iN)}v\in H^{p,-p-1}$$
Then $Xu\in H^{p-1,-p}$.
We denote by $\|\bullet \|$ the norm induced by the polarization with respect to $F_0$.
Scaling $v$, we may assume $\| u\|=1$.
\begin{lem}\label{X-act}
$Xu=-\exp{(-iN)}v$ and $\|Xu\|=1$.
\end{lem}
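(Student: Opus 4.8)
The plan is to prove the two assertions separately, first computing $Xu$ explicitly and then comparing Hodge norms.

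For the first assertion I would begin by recording the eigenvalue data that drives everything. Since $v\in I^{p,-p}$, the relation $Hw=(p+q+1)w$ gives $Hv=v$, while $Nv\in I^{p-1,-p-1}$ gives $H(Nv)=-Nv$. Moreover $N^+v$ would have $H$-eigenvalue $3$, which lies outside the set $\{-1,0,1\}$ of $H$-eigenvalues on $H_\CC$ (the LMHS having weights $0,-1,-2$), so $N^+v=0$; together with the bracket relation $[N^+,N]=H$ coming from $\rho_*$ being a homomorphism, this yields $N^+Nv=NN^+v+Hv=v$. With these in hand I would expand $u=\exp(iN)v=v+iNv$ (using $N^2=0$) and apply $X=\tfrac12(iN-H+iN^+)$ term by term: the three summands are $iN(v+iNv)=iNv$, then $-H(v+iNv)=-v+iNv$, and finally $iN^+(v+iNv)=-N^+Nv=-v$ (using $N^+v=0$). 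Collecting gives $Xu=iNv-v=-\exp(-iN)v$, which is the first assertion. (One could instead conjugate, computing $\exp(-iN)X\exp(iN)$ via $\mathrm{ad}_{-iN}$ and evaluating on $v$, but the direct expansion is shorter.)

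For the norm I would work with the Hodge inner product $h(w,w')=\langle Cw,\overline{w'}\rangle$ attached to $F_0$, where $C$ is the Weil operator, acting as $i^{2p+1}$ on $H^{p,-p-1}$ and as $i^{2p-1}$ on $H^{p-1,-p}$. The strategy is to prove the relation $\langle Xu,\overline{Xu}\rangle=-\langle u,\bar u\rangle$ and then let the two Weil factors, whose ratio is $i^{-2}=-1$, cancel that sign. The three ingredients are: $N$ is skew for $\langle\,,\,\rangle$ (as $N\in\mathfrak{sp}$, so $\langle Nx,y\rangle=-\langle x,Ny\rangle$); $N^2=0$; and the orthogonality $\langle v,\bar v\rangle=0$. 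Granting these, I would write $\bar u=\bar v-iN\bar v$ and $\overline{Xu}=-(\bar v+iN\bar v)$ and expand bilinearly; each pairing collapses to a single term, giving $\langle u,\bar u\rangle=-2i\langle v,N\bar v\rangle$ and $\langle Xu,\overline{Xu}\rangle=2i\langle v,N\bar v\rangle$. Hence $\|Xu\|^2=i^{2p-1}\langle Xu,\overline{Xu}\rangle=-i^{2p-1}\langle u,\bar u\rangle=i^{2p+1}\langle u,\bar u\rangle=\|u\|^2=1$.

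The one genuinely non-formal input, and the step I expect to be the main obstacle, is the orthogonality $\langle v,\bar v\rangle=0$. This fails at the level of a pure Hodge structure and comes instead from the bigrading of the polarized LMHS: since the polarization has Tate weight (it maps $H\otimes H\to\QQ(1)$), it pairs $I^{p,q}$ nontrivially with $I^{r,s}$ only when $(r,s)=(-1-p,-1-q)$. As $v\in I^{p,-p}$ and $\bar v\in I^{-p,p}$, and $(-p,p)\neq(-1-p,p-1)$, the pairing vanishes. I would take care to get this weight bookkeeping exactly right, since the sign cancellation in the norm computation depends on it entirely; the remaining manipulations are routine once $N^2=0$ and the skewness of $N$ are invoked.
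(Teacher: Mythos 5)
Your computation of $Xu$ is the same as the paper's: expand $u=v+iNv$, apply the three summands of $X$ using $Hv=v$, $HNv=-Nv$, $N^+v=0$, $N^+Nv=v$ (you in fact supply the eigenvalue argument for $N^+v=0$ that the paper leaves implicit). For the norm, however, you take a genuinely different route to the key identities. Both proofs reduce to the four pairings $a=\langle v,\bar v\rangle$, $b=\langle Nv,\bar v\rangle$, $c=\langle v,N\bar v\rangle$, $d=\langle Nv,N\bar v\rangle$ and to showing $a=d=0$, $b=-c$, after which the Weil-operator factors $i^{2p-1}$ versus $i^{2p+1}$ produce the sign cancellation exactly as you describe. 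The paper gets $d=0$ and $b=-c$ by solving the linear system coming from the Hodge-theoretic orthogonality $\langle u,\overline{Xu}\rangle=\langle Xu,\bar u\rangle=0$ (i.e., from the fact that $u\in H^{p,-p-1}$ and $Xu\in H^{p-1,-p}$ are in distinct Hodge components of the genuine Hodge structure at $F_0$, which rests on Lemma \ref{exp-zN}), together with $a=0$ from the first bilinear relation $\langle\hat F^p,\hat F^{-p}\rangle=0$. You instead get them directly from $N\in\mathfrak{sp}$ ($\langle Nx,y\rangle=-\langle x,Ny\rangle$) and $N^2=0$, and you justify $a=0$ via the $I^{p,q}$-orthogonality of the limiting mixed Hodge structure rather than the isotropy condition defining $\check D$. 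Your derivation is more self-contained — it does not need to know that $Xu$ lands in a different Hodge component — while the paper's recycles structure it has already set up; both are correct, and your weight bookkeeping for $\langle I^{p,-p},I^{-p,p}\rangle=0$ checks out (one could also just quote $\langle \hat F^p,\hat F^{-p}\rangle=0$, valid for any point of $\check D$, which is the paper's shortcut).
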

\begin{proof}
By the property of the $sl_2$-triple,
\begin{align*}
&N^+Nv=v,\quad N^+N\bar{v}=\bar{v},\quad N^+v=N^+\bar{v}=0,\\
&Hv=v,\quad H\bar{v}=\bar{v},\quad HNv=-Nv,\quad HN\bar{v}=-N\bar{v}.
\end{align*}
Then 
\begin{align*}
Xu&=\frac{1}{2}(iN-H+iN^+)(v+iNv)\\
&=-v+iNv=-\exp{(-iN)}v.
\end{align*}

Next we show $\|Xu\|=1$.
Let $a=\langle v,\bar{v}\rangle$, $b=\langle Nv,\bar{v}\rangle$, $c=\langle v,N\bar{v}\rangle$ and $d=\langle Nv,N\bar{v}\rangle$.  
Then by the orthogonality and the positivity
\begin{align*}
&\langle u,\bar{u}\rangle =a+ib-ic+d=i^{-2p-1},\quad \langle u,\overline{Xu}\rangle =-a-ib-ic+d=0,\\
&\langle Xu, \bar{u}\rangle =-a+ib+ic+d=0.
\end{align*}
Since $v\in \hat{F}^p$ and $\bar{v}\in \hat{F}^{-p}$, $a=0$.
Therefore the simultaneous equation induces $d=0$, $b-c=i^{-2p-2}$ and
$\langle Xu,\overline{Xu}\rangle =a-ib+ic+d=-i^{-2p-1}$.
Then $\|Xu\|=i^{2p-1}\langle Xu,\overline{Xu}\rangle=1$.
\end{proof}
Then $X$ gives the isomorphism
\begin{align*}
X:H^{p,-p-1}_2\to H^{p-1,-p}_3;\quad \exp{(iN)}v\mapsto -\exp{(-iN)}v.
\end{align*}
Therefore we have
\begin{align}\label{val-1}
i^{2p+1}\langle\exp{(zX)}u,\overline{\exp{(zX)}u}\rangle =1-|z|^2.
\end{align}
If $v'\in I^{p,-p}$ is orthogonal to $v$ for $\langle\bullet ,N\bar{\bullet}\rangle$,
\begin{align}\label{val-2}
\langle\exp{(zX)}u,\overline{\exp{(zX)}u'}\rangle =0
\end{align}
where $u'=\exp{(iN)}v'$.


For the Hodge numbers $\{h^{p,-p-1}\}_p$, we define
\begin{align*}
f^p_{\ev}=\sum_{\substack{r\geq p,\\ r\text{: even}}}h^{r,-r-1},\quad f^p_{\od}= \sum_{\substack{r\geq p,\\ r\text{: odd}}}h^{r,-r-1}.
\end{align*}
The maximally compact subgroup $K_0$ is isomorphic to the unitary group $U(n)$ as in (\ref{groups}), and the orbit $C_0=K_0F_0$ is a compact submanifold of $D$ by \cite[Lemma 5.1.3]{FHW}.
The cycle space is defined by
$$\calM_D=\{gC_0\; |\; gC_0\subset D,\; g\in G_{\CC} \}.$$
By \cite[Lemma 5.1.3]{FHW}, $\calM_D$ is an open subset of the complex manifold
$$\calM_{\check{D}}=\{gC_0\; |\; g\in G_{\CC} \}.$$
If $K_0=L_0$, i.e. $D$ is a Siegel space, $C_0$ is the base point $F_0$ and $\calM_D=D$.
We describe $\calM_D$ for the case where $K_0\neq L_0$ following \cite[\S 5.5B]{FHW}. 
Here $C_0$ can be written as
$$C_0=\{ F\in D \; |\;\dim{(F^p\cap H^{\even})}=f^p_{\even},\; \dim{(F^p\cap H^{\odd})}=f^p_{\odd}\}$$
where
$$H^{\ev}=\bigoplus_{p:\text{ even}} H^{p,q},\quad H^{\od}=\bigoplus_{p: \text{ odd}}H^{p,q}.$$
Let $V$ and $W$ be complementary $\langle \; ,\;\rangle$-isotropic $n$-dimensional subspaces of $H_{\CC}$, and let
$$C_{V,W}=\{ F\in \check{D} |\;\dim{(F^p\cap V)}=f^p_{\even},\dim{(F^p\cap W)}=f^p_{\odd}\}.$$
Here $C_0=C_{H^{\even},H^{\odd}}$ and $gC_{V,W}=C_{gV,gW}$ for $g\in G_{\CC}$.
By using this, the cycle space is described as
\begin{align*}
\calM_{D}=\{C_{V,W}|\; V\gg 0\text{ and }W\ll 0\text{ for }-i\langle \bullet ,\bar{\bullet} \rangle \}.
\end{align*}
Now $G/K_0$ is isomorphic to the Siegel space $\scrH$ of degree $n$.
Moreover $\scrH$ is isomorphic to the bounded symmetric domain $\calB$ of type-III.
Then we have
\begin{align*}
&\{V\subset H_{\CC}\;|\; \dim{V}=n,\; V\gg 0,\; \langle V,V\rangle =0\}\cong \scrH\cong \calB,\\
& \{W\subset H_{\CC}\;|\; \dim{W}=n,\; W\ll 0,\; \langle W,W\rangle =0\}\cong \bar{\scrH}\cong \bar{\calB}.
\end{align*}
By \cite[\S 5.4]{FHW} or \cite[Proposition 2.5]{H} we have the isomorphism
$$\calM_D\stackrel{\sim}{\to}\calB\times \bar{\calB};\quad C_{(V,W)}\mapsto (V,W).$$
\begin{prop}\label{cyc-bd}
$\exp{(X)}C_0\in \calM_{\check{D}}$ is in the topological closure $\calM_{D}^{\cl}$ of $\calM_D$ in $\calM_{\check{D}}$ if $N^2=0$.
\end{prop}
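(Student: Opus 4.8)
## Proof Proposal

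The plan is to exhibit $\exp(X)C_0$ as a limit of genuine cycles in $\calM_D$, using the one-parameter family $\exp(zX)C_0$ with $z$ real and approaching $1$ from below. Since $\calM_D$ is described explicitly through the pair $(V,W)$ of complementary isotropic subspaces with positivity/negativity conditions for $-i\langle \bullet,\bar{\bullet}\rangle$, the strategy is to track how these conditions degenerate as $z\to 1$. First I would write $C_0 = C_{H^{\ev},H^{\od}}$ and hence $\exp(zX)C_0 = C_{\exp(zX)H^{\ev},\exp(zX)H^{\od}}$. The key computational input is already assembled: equations (\ref{val-1}) and (\ref{val-2}) tell us precisely how the polarization form behaves on the image of $H^{\ev}$ (equivalently the $V$-component) under $\exp(zX)$. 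Specifically, for the distinguished vectors $u=\exp(iN)v$ with $v\in I^{p,-p}$, the Hermitian form evaluates to $i^{2p+1}\langle\exp(zX)u,\overline{\exp(zX)u}\rangle = 1-|z|^2$, which stays strictly positive exactly when $|z|<1$.

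The core of the argument is then to verify that for $0\le z<1$ the subspace $V_z := \exp(zX)H^{\ev}$ satisfies $V_z\gg 0$ and the complementary $W_z := \exp(zX)H^{\od}$ satisfies $W_z\ll 0$, so that $C_{V_z,W_z}\in\calM_D$ by the explicit description of the cycle space. Here I would split $H^{\ev}$ into the part moved by $X$ (the $H_2^{p,-p-1}$ pieces, on which $X$ acts as the isomorphism onto $H_3^{p-1,-p}$ via $\exp(iN)v\mapsto -\exp(-iN)v$ from Lemma \ref{X-act}) and the part annihilated by $X$ (the $H_1$ pieces, coming from the $E(p,-p-1)$-type summands, which $\exp(zX)$ fixes). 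On the fixed part the positivity is unchanged; on the moving part equations (\ref{val-1}) and (\ref{val-2}) show the form stays positive-definite for $|z|<1$ and that distinct basis vectors remain orthogonal, so no cross-terms spoil definiteness. The same computation, applied to $W=H^{\od}$ where the roles of $H_2$ and $H_3$ are interchanged, gives $W_z\ll 0$. One also checks that $V_z,W_z$ remain complementary and isotropic, which follows since $\exp(zX)\in G_\CC$ preserves $\langle\;,\;\rangle$ and since $X^2=0$ keeps the dimensions of the intersections $F^p\cap V_z$, $F^p\cap W_z$ constant.

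Finally, letting $z\to 1^-$, the cycles $C_{V_z,W_z}$ converge in $\calM_{\check D}$ to $C_{\exp(X)H^{\ev},\exp(X)H^{\od}}=\exp(X)C_0$, since $z\mapsto\exp(zX)C_0$ is continuous into $\calM_{\check D}$ and each $C_{V_z,W_z}$ lies in $\calM_D$ for $z<1$. Hence $\exp(X)C_0\in\calM_D^{\cl}$, as claimed. I expect the main obstacle to be the degeneracy at $z=1$ itself: at $|z|=1$ the form $1-|z|^2$ vanishes, so positivity is lost and $\exp(X)C_0$ genuinely sits on the boundary rather than inside $\calM_D$; care is needed to confirm that the semi-definiteness is not so severe that $\exp(X)C_0$ falls outside $\calM_{\check D}$ entirely (i.e., that $\exp(X)H^{\ev}$ and $\exp(X)H^{\od}$ still have the correct dimensions and intersection profile to define a point of $\calM_{\check D}=\{gC_0\mid g\in G_\CC\}$). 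Controlling this requires knowing that $\exp(X)$ remains in $G_\CC$ and that the limiting subspaces stay transverse in the flag-theoretic sense defining $C_{V,W}$, which should follow from $X\in\frakg_\CC$ and the nilpotency $X^2=0$; establishing this transversality cleanly is where I would concentrate the technical effort.
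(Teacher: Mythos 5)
Your proposal is correct and follows essentially the same route as the paper: both arguments rest on the orthogonal decomposition (\ref{p-decomp}), the fact that $X$ annihilates $H^{p,-p-1}_1$ and $H^{p,-p-1}_3$, and the formulas (\ref{val-1})--(\ref{val-2}). The paper simply evaluates at $z=1$ and reads off semi-definiteness of $e^{X}H^{\ev}$ and $e^{X}H^{\od}$, hence membership in $\calB^{\cl}\times\bar{\calB}^{\cl}\cong\calM_{D}^{\cl}$, whereas you interpolate along $z\in[0,1)$ and pass to the limit --- the same computation packaged as an explicit approximating family.
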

\begin{proof}
Now we have the decomposition $H^{p,-p-1}=\bigoplus_{j=1,2,3} H^{p,-p-1}_j$ of (\ref{p-decomp}) with respect to $\exp{(iN)}\hat{F}$.
Since $XH^{p,-p-1}_1=XH^{p,-p-1}_3=0$, then
$$e^{X}H^{p,p-1}=H^{p,p-1}_1\oplus e^{X}H^{p,p-1}_2\oplus H^{p,p-1}_3.$$
Here the above components are orthogonal to each other, and by (\ref{val-1}) and (\ref{val-2}) $e^X H^{p,p-1}_2$ is non-negative (resp. non-positive) for $i^{2p-1}\langle \bullet ,\bar{\bullet} \rangle$ if $p$ is even (resp. odd).
Then $e^{X}H^{\ev}$ (resp. $e^XH^{\od}$) is in the closure $\calB^{\cl}$ (resp. $\bar{\calB}^{\cl}$) .
\end{proof}

\begin{rem}
If $N^2\neq 0$, $e^XC_0$ need not to be in $\calM_{D}^{\cl}$ (See \cite[proposition 4.6]{H}).
\end{rem}
\subsection{Maps to the Satake boundary components}
We call $(\sigma ,\exp{(\sigma_{\CC})F})$ a nilpotent orbit if it satisfies the following conditions:
\begin{itemize}
\item $\sigma$ is a cone in $\frakg$ where we can choose generators of $\sigma$ over $\QQ$;
\item Any elements of $\sigma$ are commutative with each other;
\item $F\in \check{D}$ and $\exp{(zN)}F\in D$ for $\Im{z}\gg 0$ and for $N$ in the relative interior $\sigma^{\circ}$;
\item $NF^{p}\subset F^{p-1}$ for $N\in\sigma$. 
\end{itemize}
The above conditions do not depend on the choice of $F\in\exp{(\sigma_{\CC})}F$.
By \cite{CK} the monodromy weight filtration $W(N)$ does not depend on the choice of $N\in\sigma^{\circ}$ (we denote it by $W(\sigma)$), and $(W(\sigma),F)$ is a LMHS  by \cite{S}.  
\begin{dfn}\label{ev-od}
A nilpotent orbit $(\sigma,\exp{(\sigma_{\CC})F})$ is called even-type (resp. odd-type) if $N^2=0$ for $N$ in the relative interior $\sigma^{\circ}$ and $I^{p,-p}=0$ for any odd (resp. even) integer $p$ with respect to the LMHS $(W(\sigma ),F)$.
A nilpotent cone $\sigma$ is called even-type (resp. odd-type) if every $\sigma$-nilpotent orbit is even-type (resp. odd-type).
A fan $\Sigma$ is called even-type (resp. odd-type) if any face of $\Sigma$ is even-type (resp. odd-type)
\end{dfn}
Let $(\sigma,\exp{(\sigma_{\CC})F})$ be a nilpotent orbit of even-type or odd-type.
For $(N,F)$ with $N\in \sigma^{\circ}$, we have the compact submanifold $C_0\in\calM_{D}$ as in the previous subsection.
By Proposition \ref{cyc-bd}, $\exp{(X)}C_0\in \calM_{D}^{\cl}$.
If $K_0\neq L_0$, $\calM_{D}\cong\calB\times \bar{\calB}$.
We then define the two projections
$$p^{\ev}:\calM_{D}^{\cl}\to \calB^{\cl},\quad p^{\od}:\calM_{D}^{\cl}\to \bar{\calB}^{\cl}.$$
If $K_0=L_0$, we define $p^{\ev}$ as the canonical isomorphism $\calM_{D}^{\cl}\stackrel{\sim}{\to}\calB^{\cl}$ and $p^{\od}$ as the complex conjugation map $\calM_{D}^{\cl}\stackrel{\sim}{\to}\calB^{\cl}\to\bar{\calB}^{\cl}$.   
\begin{thm}\label{cyc-sigma}
If the nilpotent orbit is of even-type (resp. odd-type),
$p^{\even}(\exp{(X)}C_0)\in \calB^{\cl}$ (resp. $p^{\odd}(\exp{(X)}C_0)\in \overline{\calB}^{\cl}$) does not depend on the choice of $N\in \sigma^{\circ}$ and $F\in\exp{(\sigma_{\CC})}F$.
\end{thm}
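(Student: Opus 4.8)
The plan is to reduce the statement to a single explicit subspace, compute that subspace, and then recognize it as data attached only to $W(\sigma)$ and to the induced Hodge structure on $\Gr^{W(\sigma)}_{-1}$, both of which are visibly independent of the choices. Since $gC_{V,W}=C_{gV,gW}$, we have $\exp(X)C_0=C_{(\exp(X)H^{\ev},\exp(X)H^{\od})}$, so by the definition of $p^{\ev}$, $p^{\od}$ together with Proposition \ref{cyc-bd} it suffices to show that $\exp(X)H^{\ev}$ is choice-independent in the even-type case (the odd-type case is word for word the same with $H^{\od}$ and $\bar{\calB}^{\cl}$). Thus the whole theorem comes down to giving $\exp(X)H^{\ev}$ an intrinsic description.

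First I would compute $\exp(X)H^{\ev}$. By (\ref{p-decomp}) each summand splits as $H^{p,-p-1}=H^{p,-p-1}_1\oplus H^{p,-p-1}_2\oplus H^{p,-p-1}_3$, and $X$ annihilates $H_1$ and $H_3$. For even $p$ the even-type hypothesis $I^{p,-p}=0$ for odd $p$ forces $H^{p,-p-1}_3=e^{\bar z N}I^{p+1,-p-1}=0$, because $p+1$ is odd. Hence $H^{p,-p-1}=I^{p,-p-1}\oplus e^{iN}I^{p,-p}$. Using $N^2=0$ and Lemma \ref{X-act}, for $v\in I^{p,-p}$ one gets $\exp(X)(e^{iN}v)=e^{iN}v-e^{-iN}v=2iNv$, so that $\exp(X)(e^{iN}I^{p,-p})=NI^{p,-p}=I^{p-1,-p-1}$, the last equality because $N:\Gr^{W(\sigma)}_0\stackrel{\sim}{\to}\Gr^{W(\sigma)}_{-2}$ is an isomorphism of the corresponding Deligne pieces. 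Therefore
\begin{align*}
\exp(X)H^{\ev}=\bigoplus_{p\text{ even}}\bigl(I^{p,-p-1}\oplus I^{p-1,-p-1}\bigr).
\end{align*}

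Next I would reorganize this sum by weight. The pieces $I^{p-1,-p-1}$ with $p$ even are exactly the nonzero weight $-2$ Deligne pieces of the LMHS, so their sum is $W(\sigma)_{-2,\CC}=\Im N$, while the pieces $I^{p,-p-1}$ with $p$ even lie in $W(\sigma)_{-1,\CC}$. Consequently $\exp(X)H^{\ev}\subset W(\sigma)_{-1,\CC}$, it contains $W(\sigma)_{-2,\CC}$, and it equals the full preimage under the projection $W(\sigma)_{-1,\CC}\twoheadrightarrow \Gr^{W(\sigma)}_{-1,\CC}$ of the subspace $\bigoplus_{p\text{ even}}(\Gr^{W(\sigma)}_{-1})^{p,-p-1}$, the ``even part'' of the Hodge decomposition of the weight $-1$ Hodge structure on $\Gr^{W(\sigma)}_{-1}$. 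This description no longer refers to any Deligne lift, and the explicit formula above shows that it is valid for every admissible choice of $N$ and $F$.

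Finally I would verify that this preimage is choice-independent. The filtration $W(\sigma)$ does not depend on $N\in\sigma^{\circ}$ by \cite{CK}, so $W(\sigma)_{-1,\CC}$ and $W(\sigma)_{-2,\CC}$ are intrinsic. Every element of $\sigma_{\CC}$ maps $W(\sigma)_{-1}$ into $W(\sigma)_{-3}=0$ and hence acts as $0$ on $\Gr^{W(\sigma)}_{-1}$, so the induced filtration there is the same for all $F\in\exp(\sigma_{\CC})F$; likewise $\delta\in L^{-1,-1}_{\RR}$ acts by $0$ on $\Gr^{W(\sigma)}_{-1}$, so $\hat F$ and $F$ induce the same weight $-1$ Hodge structure. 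Thus the target subspace is fixed, and $p^{\ev}(\exp(X)C_0)$ is independent of $N$ and $F$. I expect the genuine content, rather than routine bookkeeping, to be precisely this last step: one must be sure that passing to the $\RR$-split $\hat F$ and moving $F$ within $\exp(\sigma_{\CC})F$ leave the weight $-1$ graded Hodge structure untouched, which is exactly why it is essential to first rewrite $\exp(X)H^{\ev}$ as a preimage from $\Gr^{W(\sigma)}_{-1}$ instead of in terms of the choice-dependent Deligne splitting.
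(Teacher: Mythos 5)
Your proof is correct and follows essentially the same route as the paper: reduce to the subspace $\exp(X)H^{\ev}$, use the vanishing of $H_3^{p,-p-1}$ for even $p$ and the formula $\exp(X)e^{iN}v=2iNv$ to get $\exp(X)H^{\ev}=\bigoplus_{p\ \mathrm{even}}I^{p,-p-1}\oplus \Im N_{\CC}$, and then observe this is intrinsic. Your reformulation of the last step as the preimage of the even part of the Hodge decomposition on $\Gr^{W(\sigma)}_{-1}$ is just a cleaner way of phrasing the paper's remark that $I^{p,-p-1}$ lies in the kernel of $\sigma_{\CC}$ and $L^{-1,-1}_{\RR}$.
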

\begin{proof}
By Lemma \ref{exp-zN}, $u=\exp{(iN)}v\in H^{p,-p-1}_2$ for $v\in I^{p,-p}$.
By Lemma \ref{X-act}
\begin{align}\label{exp-X}
\exp{(X)}u=\exp{(iN)}v-\exp{(-iN)}v=2iNv\in e^XH^{p,-p-1}_2,
\end{align}
and for $u'=\exp{(iN)}\bar{v}\in H^{-p,p-1}_2$,
\begin{align*}
\exp{(X)}u'=\exp{(iN)}\bar{v}-\exp{(-iN)}\bar{v}=2iN\bar{v}\in e^{X}H^{-p,p-1}_2.
\end{align*}
Since $H^{p,-p-1}_3=0$ for even (resp. odd) $p$ by definition, then 
\begin{align*}
&e^X H^{\ev}=\bigoplus_{p: \text{ even}}(H^{p,p-1}_1\oplus e^XH^{p,p-1}_2)=\bigoplus_{p: \text{ even}}H^{p,p-1}_1\oplus \Im{N}_{\CC},\\
&(\text{resp. }e^X H^{\od}=\bigoplus_{p: \text{ odd}}H^{p,p-1}_1\oplus \Im{N}_{\CC}).
\end{align*}
Here $\Im{N}=W(\sigma)_{-2}$ does not depend on the choice of $N\in \sigma^{\circ}$.
Moreover $H^{p,-p-1}_1$ is in the kernel of the action of $\sigma_{\CC}$ and $L^{-1,-1}_{\RR}(W(\sigma),F)$.
Then $e^X H^{\even}$ (resp. $e^XH^{\od}$) does not depend on the choice of $F\in\exp{(\sigma_{\CC})}F$ and $N\in\sigma^{\circ}$.
\end{proof}
The Satake boundary components of $\calB$ is corresponding to the set of real isotropic subspaces of $H_{\RR}$ (\cite[Proposition 4.4]{N}).
We denote by $\BB_{\Sat}(\sigma)$ the Satake boundary component corresponding to the real vector space $W(\sigma)_{-2}$.
The boundary point $p^{\ev}(\exp{(X)}C_0)$ (resp. $p^{\od}(\exp{(X)}C_0)$) is contained in the Satake boundary component $\BB_{\Sat}(\sigma)$ (resp. $\overline{\BB_{\Sat}(\sigma)}$).
\begin{cor}\label{ev-map-sigma}
Let $\sigma$ be an even-type (resp. odd-type) nilpotent cone and let $\BB (\sigma)$ be the set of all $\sigma$-nilpotent orbits.
Then $p^{\ev}$ (resp. $p^{\od}$) gives a well-defined map $\BB(\sigma)\to \BB_{S}(\sigma)$ (resp. $\BB(\sigma)\to \overline{\BB_{S}(\sigma)}$).
\end{cor}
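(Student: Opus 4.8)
The plan is to obtain the map of the corollary by combining Theorem \ref{cyc-sigma} with the identification of the Satake stratum in which the image sits, since the substantive analysis is already in place. First I would fix the assignment on representatives: given a class in $\BB(\sigma)$ presented by a $\sigma$-nilpotent orbit $\exp(\sigma_{\CC})F$, choose any $N\in\sigma^{\circ}$ and any representative $F$, carry out the $SL(2)$-orbit construction to produce $X$ and the cycle $C_0\in\calM_D$, and declare the image to be $p^{\ev}(\exp(X)C_0)$ (resp.\ $p^{\od}(\exp(X)C_0)$). Proposition \ref{cyc-bd} guarantees $\exp(X)C_0\in\calM_D^{\cl}$, so the projection is defined on it; under the isomorphism $\calM_D\cong\calB\times\bar{\calB}$, $C_{(V,W)}\mapsto(V,W)$, this image is the isotropic subspace $e^X H^{\ev}$ (resp.\ $e^X H^{\od}$). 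When $K_0=L_0$ and $D$ is already a Siegel space, $\calM_D=D$ and $p^{\ev}$, $p^{\od}$ are the canonical isomorphism and its conjugate, so the same recipe applies verbatim.

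The second step is well-definedness, and here I would simply invoke Theorem \ref{cyc-sigma}: it asserts precisely that $p^{\ev}(\exp(X)C_0)$ does not change if $N$ is replaced by another element of $\sigma^{\circ}$ or $F$ by another representative of the orbit. In the proof of that theorem one sees $e^X H^{\ev}=\bigoplus_{p\ \text{even}}H^{p,-p-1}_1\oplus\Im N_{\CC}$, with $\Im N_{\CC}=W(\sigma)_{-2,\CC}$ independent of $N\in\sigma^{\circ}$ and the summands $H^{p,-p-1}_1$ lying in the kernel of the relevant actions, hence unchanged under the permitted modifications. Thus the recipe descends to a genuine map on $\BB(\sigma)$ with values in $\calB^{\cl}$ (resp.\ $\bar{\calB}^{\cl}$).

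The remaining and most delicate step is to show the image lands in $\BB_S(\sigma)$ rather than in some other boundary stratum. I would argue that the null locus of $e^X H^{\ev}$ for $-i\langle\bullet,\bar{\bullet}\rangle$ is exactly $\Im N_{\CC}$: evaluating (\ref{val-1}) at $z=1$ shows the vectors $\exp(X)u=2iNv$ of (\ref{exp-X}) are isotropic, while the $H^{p,-p-1}_1$ components remain definite. Since the real space underlying this null locus is $W(\sigma)_{-2}$, and the Satake boundary components of $\calB$ are parametrized by real isotropic subspaces of $H_{\RR}$ by \cite[Proposition 4.4]{N}, the point lies in the component $\BB_S(\sigma)$ attached to $W(\sigma)_{-2}$. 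The hard part will be verifying that $W(\sigma)_{-2}$ alone pins down the stratum, i.e.\ that varying the definite summands $\bigoplus H^{p,-p-1}_1$ moves the point within $\BB_S(\sigma)$ and does not jump between Satake components; once this is checked, combining it with the well-definedness of the previous step yields the asserted map $p^{\ev}\colon\BB(\sigma)\to\BB_S(\sigma)$ (resp.\ $p^{\od}\colon\BB(\sigma)\to\overline{\BB_S(\sigma)}$).
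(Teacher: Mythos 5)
Your proposal is correct and matches the paper's (implicit) argument: the corollary is deduced exactly as you say, from Theorem \ref{cyc-sigma} for well-definedness together with the observation that the null locus of $e^XH^{\ev}$ (resp.\ $e^XH^{\od}$) is $W(\sigma)_{-2,\CC}$, which places the image in $\BB_{S}(\sigma)$ via the parametrization of Satake boundary components by real isotropic subspaces. The step you flag as ``the hard part'' is not an open issue --- it is precisely the content of the cited \cite[Proposition 4.4]{N}, which says the Satake stratum is determined by the isotropic subspace alone, so varying the definite summands $\bigoplus H^{p,-p-1}_1$ only moves the point within that stratum.
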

Let $\Sigma$ be an even-type (resp. odd-type) fan.
We write $D_{\Sigma}=\bigsqcup_{\sigma\in \Sigma}\BB(\sigma)$.
We have the well-defined map
\begin{align*}
p^{\ev}: D_{\Sigma}\to \scrH_S \quad (\text{resp. } p^{\od}: D_{\Sigma}\to \bar{\scrH}_S)
\end{align*}
where the map restricted on $\BB(\sigma)$ is given by Corollary \ref{ev-map-sigma}.
If $\sigma=\{0\}$, $\BB(\sigma)=D$ and $p^{\ev}|_D$ (resp. $p^{\od}|_{D}$) is the map given by taking the even-part (resp. odd-part) of the Hodge decomposition.
Let $\Gamma$ be a subgroup of $G_{\ZZ}=\Aut{(H_{\ZZ},\langle\; ,\; \rangle)}$.
$\gamma\in \Gamma$ gives a map from $\BB(\sigma)$ to $\BB(\Ad{(\gamma)}\sigma)$ by
$$(\sigma,\exp{(\sigma_{\CC})}F)\mapsto (\Ad{(\gamma)}\sigma ,\gamma\exp{(\sigma_{\CC})}F).$$ 
We assume that $\Gamma$ is compatible with the fan $\Sigma$.
Then $p^{\ev} $ (resp. $p^{\od}$) is compatible with the action of $\Gamma$ and we can define
\begin{align}\label{Sat-map}
p^{\ev}: \Gamma\bs D_{\Sigma}\to \Gamma\bs \scrH_S \quad (\text{resp. } p^{\od}: \Gamma\bs D_{\Sigma}\to \overline{\Gamma\bs\scrH _S}).
\end{align}
\begin{exm}\label{siegel-exm}
Let $D$ be a Siegel space $\frakh$, i.e. $h^{p,-p-1}=0$ if $p\neq 0,-1$.
We take a nilpotent cone $\sigma$ in the open real cone $\eta _i^+$ of \cite[\S 4]{CCK}.
By \cite[Proposition 4.2]{CCK}, $(W(\sigma),F)$ is a LMHS if and only if $F\in \exp{(\sigma_{\CC})}\frakh\subset\check{\scrH}$, and
$$\BB (\sigma)\cong \exp{(\sigma_{\CC})}\scrH /\exp{(\sigma_{\CC})}.$$
Here the LMHS is the following type:
$$
\xymatrix{
&\stackrel{(0,0)}{\bullet}\ar@{->}[dd]^N& \\ \stackrel{(0,-1)}{\bullet}& & \stackrel{(-1,0)}{\bullet} \\ &\stackrel{(-1,-1)}{\bullet}&
}$$
Then any nilpotent orbit for $\frakh$ is even-type, where $p^{\even}|_{D}=id$ and $p^{\even}:\BB (\sigma)\to \BB_S(\sigma)$ coincides with the map $\zeta$ of \cite[\S 4(3)]{CCK}.
By \cite[\S 6]{CCK} $\zeta $ induces the maps from the toroidal compactifications to the Satake compactification.
\end{exm}

\subsection{Maps to the toroidal boundary components.}
For an even-type (resp. odd-type) nilpotent orbit $(\sigma, \exp{(\sigma_{\CC})F})$, we have the $\RR$-split nilpotent orbit   $(\sigma, \exp{(\sigma_{\CC})}\hat{F})$.
Let $H_{\CC}=\bigoplus _{p+q=0,-1,-2} I^{p,q}$ be the Deligne decomposition with respect to the LMHS $(W(\sigma),\hat{F})$.
We define $\tilde{F}\in \check{\scrH}$ by
\begin{align*}
&\tilde{F}^{0}=(\bigoplus_{p: \text{ even}}I^{p,-p-1})\oplus (\bigoplus_{p}I^{p,-p}),\\
&(\text{resp. }\tilde{F}^{0}=(\bigoplus_{p: \text{ odd}}I^{p,-p-1})\oplus (\bigoplus_{p}I^{p,-p}))
\end{align*}
Now $(W(\sigma),\tilde{F})$ (resp. $(W(-\sigma),\overline{\tilde{F}})$) is a $\RR$-split LMHS.
In fact $\langle \bullet ,N\bullet\rangle$ (resp.  $\langle \bullet ,-N\bullet\rangle$) with $N \in \sigma^{\circ}$ gives a polarization on $\Gr^{W(\sigma)}_{0}$ since $i^{2p}=1$ if $p$ is even and $-1$ if $p$ is odd.
Then $(\sigma ,\exp{(\sigma_{\CC})}\tilde{F})$ is a $\RR$-split nilpotent orbit for $\scrH$ (resp. $\bar{\scrH}$).
Moreover we have the following proposition:
\begin{prop}\label{p-tilde}
$p^{\ev}(e^{zN}\hat{F})=e^{zN}\tilde{F}$ (resp. $p^{\od}(e^{\bar{z}N}\hat{F})=e^{\bar{z}N}\tilde{F}$) for $\Im{z}>0$ and $N\in\sigma^{\circ}$.
\end{prop}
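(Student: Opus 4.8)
The plan is to reduce the proposition to a direct computation of the Hodge decomposition of the point $e^{zN}\hat F$ and to read off its even and odd parts. Since $\hat F$ is $\RR$-split, $e^{zN}\hat F\in D$ for $\Im z>0$ by \cite[Lemma 3.12]{CKS}, and on the interior $\BB(\{0\})=D$ the map $p^{\ev}$ (resp. $p^{\od}$) is, by definition, the assignment of the even part $H^{\ev}=\bigoplus_{p\text{ even}}H^{p,-p-1}$ (resp. the odd part $H^{\od}$) of the Hodge decomposition, regarded as a point of $\calB\cong\scrH$ (resp. $\bar\calB\cong\bar\scrH$). Since a point of $\check\scrH$ is determined by its degree-$0$ piece, the whole statement reduces to the subspace identities $H^{\ev}(e^{zN}\hat F)=e^{zN}\tilde F^0$ and, in the odd case, $H^{\od}=e^{\bar z N}\tilde F^0$ in $H_\CC$, together with the (automatic) remark that these subspaces lie in the interiors $\scrH$, resp. $\bar\scrH$, because they are the even/odd parts of a genuine point of $D$.

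First I would settle the even case. By (\ref{decomp_123}) and (\ref{p-decomp}) the Hodge decomposition of $e^{zN}\hat F$ is
$$H^{p,-p-1}=I^{p,-p-1}\oplus e^{zN}I^{p,-p}\oplus e^{\bar z N}I^{p+1,-p-1}.$$
Summing over even $p$ and invoking the even-type hypothesis $I^{p+1,-(p+1)}=0$ (the index $p+1$ being odd) makes the third summand vanish, so that
$$H^{\ev}=\bigoplus_{p\text{ even}}\bigl(I^{p,-p-1}\oplus e^{zN}I^{p,-p}\bigr).$$
Now $I^{p,-p-1}\subset W(\sigma)_{-1}=\Ker N$, so $e^{zN}$ acts as the identity on each $I^{p,-p-1}$ and may be factored out of the entire expression; combined with the even-type equality $\bigoplus_{p\text{ even}}I^{p,-p}=\bigoplus_p I^{p,-p}$ this yields $H^{\ev}=e^{zN}\tilde F^0$, i.e. $p^{\ev}(e^{zN}\hat F)=e^{zN}\tilde F$.

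The odd case is the mirror computation, with the parities exchanged: for odd $p$ the type hypothesis annihilates the $H^{p,-p-1}_3$-summand (its index $p+1$ is even), the summands $I^{p,-p-1}$ again lie in $\Ker N$, and $\bigoplus_{p\text{ odd}}I^{p,-p}=\bigoplus_p I^{p,-p}$. The one step that genuinely requires care is the bookkeeping of complex conjugation. Because $p^{\od}$ takes values in the conjugate domain $\bar\scrH$, the natural holomorphic parameter there is $\bar z$; concretely I would use the relation $H^{\od}=\overline{H^{\ev}}$, which follows from $\overline{H^{p,-p-1}}=H^{-p-1,p}$, together with the facts recorded before the statement — that $(W(-\sigma),\overline{\tilde F})$ is an $\RR$-split LMHS and that $\langle\bullet,-N\bullet\rangle$ polarizes $\Gr^{W(\sigma)}_0$ — to identify the resulting subspace with $e^{\bar z N}\tilde F^0$ and to confirm that it lies in $\bar\scrH$ for $\Im z>0$. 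Matching the $z$ of the even computation with the $\bar z$ forced by the orientation of the nilpotent orbit on $\bar\scrH$ is the only delicate point; the remainder is the linear-algebra manipulation of the Deligne decomposition, which is routine once the vanishing of the $H_3$-components and the inclusion $I^{p,-p-1}\subset\Ker N$ are established.
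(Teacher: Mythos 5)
Your argument is correct and is essentially the paper's own proof: both rest on the decomposition (\ref{decomp_123})--(\ref{p-decomp}), the vanishing of the $H^{p,-p-1}_3$-summand for the relevant parity under the even/odd-type hypothesis, and the factoring out of $e^{zN}$ using $I^{p,-p-1}\subset W(\sigma)_{-1}=\Ker N$ together with $\bigoplus_{p\,\mathrm{even}}I^{p,-p}=\bigoplus_p I^{p,-p}$. The conjugation detour you add in the odd case is not needed (the direct mirror computation already identifies $H^{\od}$ with the translate of $\tilde F^0$, exactly as in the paper's one-line ``resp.'' clause), but it does no harm.
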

\begin{proof}
Let $H_{\CC}=\bigoplus_p H^{p,-p-1}$ be the Hodge decomposition with respect to $\exp{(zN)}\hat{F}$.
Since $\sigma$ is even-type (resp. odd-type), $H^{p,-p-1}_3=0$ if $p$ is even (resp. odd).
Then by (\ref{decomp_123})
\begin{align*}
p^{\ev}(e^{zN}\hat{F})^0&=\bigoplus_{p:\text{ even}}(H^{p,-p-1}_1\oplus H^{p,-p-1}_2)\\
&=\bigoplus_{p: \text{ even}} I^{p,-p-1}\oplus e^{zN}(\bigoplus_p I^{p,-p})=e^{zN}\tilde{F}^0
\end{align*}
(resp. $p^{\od}(e^{\bar{z}N}\hat{F})^0=e^{\bar{z}N}\tilde{F}^0$).
\end{proof}
We denote by $\BB_{\tr}(\sigma)$ the boundary component for $\sigma$ of $\scrH_{\sigma}$.
For an even-type (resp. odd-type) nilpotent cone $\sigma$, we define the map $\tilde{p}^{\ev}:\BB(\sigma)\to \BB_{\tr}(\sigma)$ (resp. $\tilde{p}^{\od}:\BB(\sigma)\to \overline{\BB_{\tr}(\sigma)}$) by
$$(\sigma,\exp{(\sigma_{\CC})}F)\mapsto (\sigma,\exp{(\sigma_{\CC})}e^{i\delta}\tilde{F})$$
where $F=e^{i\delta}\hat{F}$.
Then for an even-type (resp. odd-type) fan $\Sigma$ we can define the map
$$\tilde{p}^{\ev}:D_{\Sigma}\to \scrH_{\Sigma}\quad (\text{resp. }\tilde{p}^{\od}:D_{\Sigma}\to \bar{\scrH}_{\Sigma}),$$
and for a subgroup $\Gamma$ of $G_{\ZZ}$ which is compatible with $\Sigma$ we have
$$\tilde{p}^{\ev}:\Gamma\bs D_{\Sigma}\to \Gamma\bs \scrH_{\Sigma}\quad (\text{resp. }\tilde{p}^{\od}:\Gamma\bs D_{\Sigma}\to \overline{\Gamma\bs \scrH}_{\Sigma}),$$
where $\tilde{p}^{\ev}=id$ if $D=\scrH$.
Now we have the map $\zeta :\Gamma\bs \scrH_{\Sigma}\to \Gamma\bs \scrH_{S}$ (resp. $\bar{\zeta} :\Gamma\bs \scrH_{\Sigma}\to \overline{\Gamma\bs \scrH}_{S}$) such that
\begin{align*}
\zeta\circ \tilde{p}^{\ev}(\sigma ,\exp{(\sigma_{\CC})F})&=W(\sigma)_{-2}\oplus (\bigoplus_{p:\text{ even}}I^{p,-p-1}) \\
&=p^{\ev}(\sigma ,\exp{(\sigma_{\CC})F})
\end{align*}
(resp. $\bar{\zeta}\circ \tilde{p}^{\od}(\sigma ,\exp{(\sigma_{\CC})F})=p^{\od}(\sigma ,\exp{(\sigma_{\CC})F)}$).
Then we have the following theorem:
\begin{thm}
$p^{\ev}$ (resp. $p^{\od}$) factors through $\tilde{p}^{\ev}$ (resp. $\tilde{p}^{\od}$).
\end{thm}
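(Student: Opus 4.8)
The plan is to establish the two factorizations $p^{\ev}=\zeta\circ\tilde{p}^{\ev}$ and $p^{\od}=\bar{\zeta}\circ\tilde{p}^{\od}$ by verifying them stratum by stratum over the fan $\Sigma$ and then descending to the quotient by $\Gamma$. Since the even and odd cases are parallel — the odd one being obtained by complex conjugation, with $\calB$ replaced by $\bar{\calB}$ and $\sigma$ by $-\sigma$ — I would treat the even case in full and only indicate the conjugate modification at the end.

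First I would fix a cone $\sigma\in\Sigma$ and a nilpotent orbit $(\sigma,\exp(\sigma_{\CC})F)\in\BB(\sigma)$, and write $F=e^{i\delta}\hat{F}$ with $\hat{F}$ the canonical $\RR$-split point of \cite[Proposition 2.20]{CKS}. By definition $\tilde{p}^{\ev}$ sends this orbit to $(\sigma,\exp(\sigma_{\CC})e^{i\delta}\tilde{F})\in\BB_{\tr}(\sigma)$, where $\tilde{F}^{0}=(\bigoplus_{p:\text{ even}}I^{p,-p-1})\oplus(\bigoplus_{p}I^{p,-p})$. I would then apply the map $\zeta$ recalled in Example \ref{siegel-exm}. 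By its defining property, $\zeta$ sends a nilpotent orbit for $\scrH$ to the Satake boundary component attached to the weight-$(-2)$ piece of its monodromy filtration, together with the weight-$0$ part of the associated $\RR$-split Deligne splitting; for the orbit $(\sigma,\tilde{F})$ this yields the subspace $W(\sigma)_{-2}\oplus(\bigoplus_{p:\text{ even}}I^{p,-p-1})$, which is exactly the displayed value of $\zeta\circ\tilde{p}^{\ev}$.

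Next I would compare this with $p^{\ev}$. By Theorem \ref{cyc-sigma} the image $p^{\ev}(\exp(X)C_0)$ is the even part $e^{X}H^{\ev}=\bigoplus_{p:\text{ even}}H^{p,-p-1}_1\oplus\Im{N}_{\CC}=(\bigoplus_{p:\text{ even}}I^{p,-p-1})\oplus W(\sigma)_{-2,\CC}$, using $H^{p,-p-1}_1=I^{p,-p-1}$ and $\Im{N}=W(\sigma)_{-2}$. Thus the two subspaces coincide as points of $\BB_{\Sat}(\sigma)$, giving the identity on the stratum $\BB(\sigma)$; on the open stratum $\sigma=\{0\}$ both maps reduce to taking the even part of the Hodge decomposition, so the identity holds there as well.

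The step requiring the most care is the boundary behavior of $\zeta$: one must confirm that the weight-$0$ Deligne piece of the $\RR$-split LMHS $(W(\sigma),\tilde{F})$ for $\scrH$ is realized precisely by the even pieces $\bigoplus_{p:\text{ even}}I^{p,-p-1}$ inherited from the $D$-side LMHS $(W(\sigma),\hat{F})$. This is exactly the reason $\tilde{F}$ was defined by placing those even pieces into $\tilde{F}^{0}$, so the verification reduces to bookkeeping of the Deligne decomposition under the projection onto $H^{\ev}$. Once this is in place, the stratum-wise identity $\zeta\circ\tilde{p}^{\ev}=p^{\ev}$ holds; since both $p^{\ev}$ and $\tilde{p}^{\ev}$ are compatible with the $\Gamma$-action, as arranged when the quotient maps in (\ref{Sat-map}) were defined, the factorization descends to $\Gamma\bs D_{\Sigma}$. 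The odd case follows by the conjugate argument, completing the proof.
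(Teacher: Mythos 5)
Your proposal is correct and follows essentially the same route as the paper, which proves the theorem simply by exhibiting the displayed identity $\zeta\circ\tilde{p}^{\ev}(\sigma,\exp{(\sigma_{\CC})}F)=W(\sigma)_{-2}\oplus(\bigoplus_{p:\text{ even}}I^{p,-p-1})=p^{\ev}(\sigma,\exp{(\sigma_{\CC})}F)$ via the computation of $e^{X}H^{\ev}$ from Theorem \ref{cyc-sigma}. Your added remarks on the open stratum and on $\Gamma$-equivariant descent only make explicit what the paper leaves implicit.
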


\section{The (1,1,1,1)-case}
In this section we consider the case where $h^{p,-p-1}=1$ if $p=1,0,-1,-2$, $h^{p,-p-1}=0$ otherwise.
In this case
$$G\cong Sp(2,\RR),\quad L_0\cong U(1)\times U(1),\quad K_0\cong U(2),\quad C_0\cong \PP^1.$$
Any nilpotent cone in this case is rank $1$, and its generator $N$ is classified as follows:
\begin{itemize}
\item[(I)] $N^2=0,\;\dim{(\Im{N})}=1$;
\item[(II)] $N^2=0,\;\dim{(\Im{N})}=2$;
\item[(III)] $N^3\neq 0,N^4=0$.
\end{itemize}
Here type-I is even-type, type-II is odd-type and type-III is neither.
The boundary components are described by \cite[\S 12.3]{KU} or \cite{GGK}.
For type-I (resp. type-II), we describe $p^{\ev} $ and $\tilde{p}^{\ev}$ (resp. $p^{\od}$ and $\tilde{p}^{\od}$) explicitly and show the continuity.

\subsection{Preliminary}
At first we describe the period domain $D$.
Let $H_{\ZZ}$ be a rank-$4$ $\ZZ$-module.
We define a bilinear form $\langle \; ,\; \rangle$ by
$$(\langle e_j,e_k\rangle)_{j,k}=\begin{pmatrix}
\LARGE{0}&\LARGE{-I}\\ \LARGE{I}&\LARGE{0}
\end{pmatrix}$$
for a basis $e_1,\ldots ,e_4$ of $H_{\ZZ}$.
We have an open immersion
\begin{align}\label{D_check_dsc}
&\mathrm{Sym}(2,\CC)\times \CC \hookrightarrow  \check{D};\quad (\tau,\lambda)\mapsto F(\tau ,\lambda)\quad \text{where}\\
&F^1(\tau,\lambda)=\Span_{\CC} \l\{ \begin{pmatrix}\tau_{12}\\ \tau_{22}\\ 0\\1\end{pmatrix}+\lambda \begin{pmatrix}
\tau_{11}\\ \tau_{21}\\1 \\ 0
\end{pmatrix}\r\},\quad F^0(\tau,\lambda)=\Span_{\CC} \l\{ \begin{pmatrix}\tau_{12}\\ \tau_{22}\\0 \\1\end{pmatrix}, \begin{pmatrix}
\tau_{11}\\ \tau_{21}\\1 \\ 0
\end{pmatrix}\r\},\nonumber\\
&F^{-1}(\tau,\lambda)=F^1(\tau,\lambda)^{\perp}.\nonumber
\end{align}
Here $F(\tau ,\lambda)\in D$ if, and only if,
\begin{align}\label{inD}
\det{(\Im{\tau})}<0,\quad -i \langle\omega,\bar{\omega}\rangle >0 \quad\text{for } 0\neq\omega\in F^1(\tau,\lambda).
\end{align}

Let $\sigma$ be a nilpotent cone and let $\Gamma$ be a subgroup of $G_{\ZZ}$ such that
\begin{align}\label{str-cpt}
\text{there exists } \gamma\in \Gamma\text{ which satisfies }\sigma=\RR_{\geq 0}\log{(\gamma)}.
\end{align}
We write $\Gamma(\sigma)^{\gp}=\exp{(\sigma_{\RR})}\cap \Gamma$.
The topology of $\GsDs$ is the quotient topology via the map $\torsor$ (\cite[3.4.2]{KU}).
Here $\Es$ is the subset of $\check{D}\times \CC$ such that
\begin{align*}
(F,z)\in E_{\sigma}\Leftrightarrow \begin{cases}(\sigma ,\exp{(\sigma_{\CC})}F)\text{ is a nilpotent orbit}&\text{if }z=0,\\ \exp{(\ell (z)N)}F\in D&\text{if }z\neq 0,\end{cases}
\end{align*}
where $N$ is the generator of $\exp{(\sigma)}\cap \Gamma$ and $\ell (z)=\log{z}/2\pi i$ (we may assume that $\ell(z)$ is the one-valued function by taking a branch of $\log$).
The map $\torsor$ is given by
$$(F,z)\mapsto \begin{cases}(\sigma ,\exp{(\sigma_{\CC})}F)&\text{if }z=0,\\ \exp{(\ell (z)N)}F\pmod{\Gs}&\text{if }z\neq 0.\end{cases}$$
The topology of $\Es$ is the {\it strong topology} in $\check{D}\times \CC$.
For a fan $\Sigma$ and a subgroup $\Gamma$ of $G_{\ZZ}$ which is compatible with $\Sigma$, the topology of $\Gamma\bs D_{\Sigma}$ is the strongest topology such that $\GsDs\to \Gamma\bs D_{\Sigma}$ is continuous for all $\sigma\in \Sigma$.

We review the definition of the strong topology briefly.
For an analytic space $Y$ and a subset $X$, a subset $U$ of $X$ is open in the strong topology in $Y$ if $f^{-1}(U)$ is open for any analytic space $Z$ and any analytic map $f:Z\to Y$ such that $f(Z)\subset X$.
The following example is typical:
\begin{exm}[{\cite[3.1.3]{KU}}]\label{str-top}
Let $Y=\CC^2$ and $X=Y-\{0\}\times \CC^*$.
The strong topology on $X$ in $Y$ does not coincide with the topology as a subspace of $Y$ around the origin.  
We put
\begin{align*}
&U_n(\delta_n)=\left\{\begin{array}{l|l}(z_1,z_2)\in\Delta_{\delta_n}^2&\begin{array}{ll}|z_1|^n<|z_2|, \; z_2\neq 0\end{array}\end{array}\right\},\\
&U(\delta)=\bigcup_n U_n(\delta_n)\cup \{(0,0)\}
\end{align*}
where $\Delta_{\delta_n}$ is the $\delta_n$-open disk with $\delta_n>0$ and $\delta=\{\delta_n\}_n$.
Then $U(\delta)\subset X$ is an open neighborhood of the origin and $U(\delta)$, where $\delta$ runs over all sequences in $\RR_{>0}$, form a fundamental system of neighborhoods of the origin. 
\end{exm}
\subsection{Boundaries for type-I}
Let $\sigma=\RR_{\geq 0} N$ be the type-I nilpotent element with $N(e_3)=e_1$ and $N(e_j)=0$ for $j=1,2,4$.
Here the LMHS of this type is described as the following diagram:
$$
\xymatrix{
&\stackrel{(0,0)}{\bullet}\ar@{->}[dd]^N& \\ \stackrel{(1,-2)}{\bullet}& & \stackrel{(-2,1)}{\bullet} \\ &\stackrel{(-1,-1)}{\bullet}&
}$$
Then this nilpotent cone is even-type.
We define a fan
$$\Sigma_{\ev}=\{\Ad{(g)}\sigma\; |\; g\in G_{\ZZ}\}.$$
Then $\Sigma_{\ev}$ is the fan of all nilpotent cones of even-type by \cite[\S 12.3]{KU}.

We write
\begin{align*}
\xi _0(w)=\begin{pmatrix}
0\\w\\1\\0
\end{pmatrix},\quad
\xi _1(v,w)=\begin{pmatrix}
w\\v\\0\\1
\end{pmatrix}
\end{align*}
and define a filtration $F(v,w)\in \check{D}$ by
\begin{align*}
F^{1}(v,w)=\Span_{\CC}\{\xi_1(v,w)\},\quad F^0(v,w)=\Span_{\CC}\{\xi_1 (v,w),\xi _0(w)\} .
\end{align*}
Then
\begin{align*}
\BB (\sigma )=\left\{\begin{array}{l|l}(\sigma, \exp{(\CC N)}F(v,w))&\end{array}\Im{v}<0,\;w\in \CC\right\}.
\end{align*}
Let $F=F(v,w)$ with $\Im{v}<0$.
Then the $(0,0)$-component of the Deligne decomposition for $(W(N),F)$ is
$$F^0\cap W_0(N)\cap (\bar{F}^0\cap W_0(N)+\bar{F}^{-1}\cap W_{-2}(N))=F^0\cap(\bar{F}^0+N\bar{F}^0),$$
which is generated by
$$e=\xi_0(w)-\gamma\xi_1(v,w)=\begin{pmatrix}-\gamma w\\ \Re{w}-\gamma \Re{v}\\ 1\\ -\gamma\end{pmatrix},\quad \text{where }\gamma=\frac{\Im{w}}{\Im{v}}.$$
The $\RR$-split mixed Hodge structure $(W(N),\hat{F})$ associated to  $(W(N),F)$ is given by
$$\hat{F}=\exp{(\gamma i\Im{w}N)}F.$$
In fact, the $(0,0)$-component is generated by
$$\hat{e}=\exp{(\gamma i\Im{w}N)}e=\begin{pmatrix}-\gamma\Re{w}\\ \Re{w}-\gamma\Re{v}\\1 \\-\gamma \end{pmatrix}.$$
Then for the Hodge decomposition with respect to $\exp{(iN)}\hat{F}$,
$$
u_1:=\xi_1(v,w)\in H^{1,-2},\quad u_0:=\exp{(iN)}\hat{e}\in H^{0,-1}.
$$
Here $H^{\even}$ is generated by $u_0$ and $\bar{u}_1$, and by (\ref{exp-X})
$$\exp{(X)}u_0=2iN\hat{e}=2i e_1, \quad\exp{(X)}u_1=u_1.$$
Then
\begin{align}\label{even-1}
p^{\even}(\sigma,\exp{(\sigma_{\CC})}F)&=\exp{(X)}H^{\even}=\Span_{\CC}\{e_1, \bar{v}e_2+e_4\},
\end{align}
which is contained in $\BB_S (\sigma)$.
Moreover the map $p^{\ev}:\BB(\sigma)\to \BB_S(\sigma)$ is surjective.

Now $\tilde{F}$ is given by $\tilde{F}^0=\Span_{\CC}\{\overline{\xi_1(v,w)}, \hat{e}\}$.
By Proposition \ref{p-tilde}, for $\Im{z}>0$
$$\tilde{p}^{\ev}(\exp{(zN)}\hat{F})=\exp{(zN)}\tilde{F}=\begin{pmatrix}z-\gamma i\Im{w}&\bar{w}\cr \bar{w}&\bar{v}\end{pmatrix}.$$
Since $\exp{(\sigma_{\CC})}F=\exp{(\sigma_{\CC})}\hat{F}$, we have
$$\tilde{p}^{\ev}(\sigma ,\exp{(\sigma_{\CC})}F)=(\sigma,\exp{(\sigma_{\CC})}\tilde{F}).$$

\begin{prop}\label{ev-cont}
Let $\Gamma$ be a subgroup of $G_{\ZZ}$ which is compatible with $\Gamma$ and satisfies the condition (\ref{str-cpt}) for any $\sigma\in\Sigma_{\ev}$.
Then $\tilde{p}^{\ev}:\Gamma\bs D_{\Sigma_{\ev}}\to \Gamma\bs \scrH_{\Sigma_{\ev}}$ is continuous.
\end{prop}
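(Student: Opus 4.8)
The plan is to peel off the two layers in the definition of the topology and reduce the statement to an elementary continuity check in a local toric chart. By construction $\Gamma\bs D_{\Sigma_{\ev}}$ carries the strongest topology for which each $\GsDs\to\Gamma\bs D_{\Sigma_{\ev}}$ is continuous, and $\GsDs$ carries the quotient topology from $\Es$ via $\torsor$; hence it suffices to prove that for every $\sigma\in\Sigma_{\ev}$ the $\Gs$-invariant composite $\Es\to\Gamma\bs\scrH_{\Sigma_{\ev}}$ is continuous. Condition (\ref{str-cpt}) guarantees that $\Es\to\GsDs$ is the torus-type map with a single coordinate $q$ on the $\CC$-factor, and likewise singles out the corresponding toric chart on the target, so the whole question becomes local.

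The key simplification is that the target is genuinely analytic while the source topology is finer than it needs to be. Since $\Sigma_{\ev}$ is part of a fan of a toroidal compactification of the degree-$2$ Siegel space, $\Gamma\bs\scrH_{\Sigma_{\ev}}$ is an open subset of the analytic space $\Gamma\bs\scrH_{\Sigma_{\tr}}$, and in the Hermitian case its strong topology coincides with the analytic one; so continuity into it is ordinary continuity in the toric coordinates $(q_{11},\tau_{12},\tau_{22})$, where $q_{11}=\exp(2\pi i\tau_{11})$. On the other hand the strong topology on $\Es$ is strictly finer than the subspace topology induced from $\check{D}\times\CC$ (Example \ref{str-top}), so any map out of $\Es$ that is continuous for the subspace topology is automatically continuous for the strong topology. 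It therefore suffices to check that $\Es\to\Gamma\bs\scrH_{\Sigma_{\ev}}$ is continuous for the subspace topology from $\check{D}\times\CC$.

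Away from the boundary, i.e. on $\{q\neq0\}$, the composite sends $(F,q)$ to $\tilde{p}^{\ev}(\exp(\ell(q)N)F)$, which is the real-analytic even-part projection $D\to\scrH$ followed by the inclusion, hence continuous. The only issue is continuity across $\{q=0\}$. Here I use the explicit computation preceding the statement: for the normal form $F=F(v,w)$ one finds $\tilde{p}^{\ev}(\exp(\ell(q)N)\hat{F})$ to be the Siegel matrix with $\tau_{12}=\bar{w}$, $\tau_{22}=\bar{v}$ and $\tau_{11}=\ell(q)-i(\Im w)^2/\Im v$, so that in the toric chart
\begin{align*}
(q_{11},\tau_{12},\tau_{22})=\left(q\cdot\exp\!\Big(\frac{2\pi(\Im w)^2}{\Im v}\Big),\ \bar{w},\ \bar{v}\right).
\end{align*}
The entries $\bar{w},\bar{v}$ are anti-holomorphic, hence continuous up to the boundary with finite limits (and $\Im\bar{v}=-\Im v>0$ keeps $\tau_{22}$ in the upper half plane), while the correction factor lies in $(0,1]$ because $\Im v<0$ throughout $\BB(\sigma)$; thus $q_{11}=q\,e^{2\pi(\Im w)^2/\Im v}\to0$ as $q\to0$, matching the boundary value $q_{11}=0$. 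So the composite extends continuously across $\{q=0\}$.

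The main obstacle is to upgrade this from the normal form to a genuine neighbourhood in $\Es$: for a general $F\in\check{D}$ near a boundary filtration $F_0=F(v_0,w_0)$ one must show that the Siegel entries of $\tilde{p}^{\ev}(\exp(\ell(q)N)F)$ still satisfy $\Im\tau_{11}\to+\infty$ (equivalently $q_{11}\to0$) while $\tau_{12},\tau_{22}$ stay bounded and converge, continuously as $(F,q)\to(F_0,0)$. This is exactly where the non-holomorphic nature of $\tilde{p}^{\ev}$ bites: it is assembled from the $\RR$-split datum $\hat{F}=e^{-i\delta}F$ and the even/odd splitting, so one cannot invoke the principle that a holomorphic map of ambient analytic spaces induces a strong-continuous map of the subsets, and must instead control the real-analytic correction factor $\exp(2\pi(\Im w)^2/\Im v)$ and its general-$F$ analogue uniformly up to the boundary. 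Once that factor is seen to be bounded and to depend continuously on $F$ up to $\{q=0\}$, subspace continuity follows, and with it strong continuity; the $\Gamma$-equivariance established earlier then lets the map descend to the quotients, yielding the continuity of $\tilde{p}^{\ev}:\Gamma\bs D_{\Sigma_{\ev}}\to\Gamma\bs\scrH_{\Sigma_{\ev}}$.
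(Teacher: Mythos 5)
Your reduction in the second paragraph is where the argument breaks. You replace strong-topology continuity of $\tilde{p}^{\ev}\circ\phi$ on $\Es$ by continuity for the subspace topology induced from $\check{D}\times\CC$, on the grounds that the strong topology is finer. The implication is valid, but the statement you reduce to is false, so the strategy cannot be completed. Concretely, in the chart used in the paper a point of $\Es$ near $(F(v,w),0)$ has $e^{\ell(z_5)N}F^1(z)$ spanned by $\eta_1(z)=\xi_1+\theta_1(z)+z_4\bigl(\xi_0+\ell(z_5)e_1+\theta_0(z)\bigr)$, and the term $z_4\ell(z_5)e_1$ tends to $0$ only when $|z_4|\log|z_5|\to 0$. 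The subspace topology admits sequences in $\Es$ for which this fails: take $\Im w=0$, $z_1=z_2=z_3=0$, $z_4=1/t$, $z_5=e^{-2\pi t}$, so that $(F(z),z_5)\to(F,0)$ in $\check{D}\times\CC$, the positivity conditions (\ref{inD}) hold for large $t$ (since $|z_4|^2\Im\ell(z_5)\to 0$), yet $z_4\ell(z_5)\to i$. Computing the Siegel coordinates of the even part $\Span_{\CC}\{\overline{\eta_1(z)},\alpha(z)\}$ along this sequence gives $\tau_{21}\to \bar{w}-2i$ rather than $\bar{w}$, so $\tilde{p}^{\ev}(e^{\ell(z_5)N}F(z))$ does not converge to $(\sigma,\exp(\sigma_{\CC})\tilde{F})$. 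Hence $\tilde{p}^{\ev}\circ\phi$ is genuinely discontinuous for the subspace topology, and the ``obstacle'' you flag at the end (uniform control of the correction factor $e^{2\pi(\Im w)^2/\Im v}$ coming from the $\delta$-direction) is not the real difficulty; the failure occurs already for $\RR$-split data and is caused by the transverse coordinate $z_4$ coupled to the divergence of $\ell(z_5)$.

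The extra fineness of the strong topology is therefore not a convenience to be discarded but the substance of the proof. The paper works with the neighborhood basis $U(\delta)=\bigl(\bigcup_nU_n(\delta_n)\bigr)\sqcup\{z_4=z_5=0\}$ of Example \ref{str-top}, whose defining constraint $|z_4|^n<|z_5|$ forces $|z_4|\log|z_5|\to 0$; it is precisely on these smaller neighborhoods that $\eta_1(z)\to\xi_1$, hence $\alpha(z)-\beta(z_5)\to 0$ and $\tilde{p}^{\ev}(e^{\ell(z_5)N}F(z))-e^{\ell(z_5)N}\tilde{F}\to 0$, which is what places $\tilde{p}^{\ev}\circ\phi(U(\delta))$ inside the prescribed neighborhood (\ref{op}) of the target. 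Your explicit computation of $q_{11}=q\,e^{2\pi(\Im w)^2/\Im v}$ is correct but only treats the normal-form ray $z_4=0$, where the problematic term is absent; the content of the proposition is the uniform statement over the strong neighborhoods, which your reduction bypasses.
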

\begin{proof}
It is sufficient to show the continuity around the boundary point $(\sigma,\exp{(\sigma_{\CC})}F(v,w))$ in $\Gamma\bs D_{\Sigma_{\ev}}$.
We write 
$$F=F(v,w),\quad \xi_0=\xi_0(w),\quad \xi_1=\xi_1(v,w),$$
and we assume $F=\hat{F}$, i.e. $\Im{w}=0$ (it is similar to show the continuity for $F\neq \hat{F}$, and we omit the proof of it).
A neighborhood of $(\sigma ,\exp{(\sigma_{\CC})}\tilde{F})$ in $\Gamma\bs\scrH_{\Sigma_{\ev}}$ is written as
\begin{align}\label{op}
&\{\exp{\l(\ell (z'_4)N\r)}\tilde{F}(z')\; |\;z'\in\Delta_{\varepsilon}^4,\; z'_4\neq 0\}\\
&\sqcup \{\exp{(\sigma_{\CC})}\tilde{F}(z')\;|\; z'\in\Delta^4_{\varepsilon},\; z'_4= 0\}\nonumber
\end{align}
for sufficiently small $\varepsilon >0$ where
$$\tilde{F}(z')=\tilde{F}+\begin{pmatrix}z'_1&z'_2\\z'_2&z'_3
\end{pmatrix}.$$
We describe neighborhoods of $(\sigma,\exp{(\sigma_{\CC})}F)$ of $\Gamma\bs D_{\Sigma_{\ev}}$, and show that there is a small neighborhood whose image through $\tilde{p}^{\ev}$ is contained in the above neighborhood (\ref{op}).

A neighborhood of the boundary point $(\sigma ,\exp{(\sigma_{\CC})}F)$ in $\Gamma\bs D_{\Sigma_{\ev}}$ is given by $E_{\sigma}$ and the map $\phi: E_{\sigma}\to \Gamma \bs D_{\Sigma_{\ev}}$.
It is sufficient to show $\tilde{p}^{\ev}\circ\phi$ is continuous.
We describe a neighborhood of $(F,0)\in E_{\sigma}\subset \check{D}\times \CC$.
Let $\Delta$ be a small open disk.
By (\ref{D_check_dsc}) an open neighborhood of $F$ in $\check{D}$ is given by
\begin{align*}
&\Delta^4 \hookrightarrow \check{D};\quad (z_1,\ldots ,z_4)\mapsto F(z)\\
&\text{where }F^1(z)=\Span_{\CC}\l\{\xi_1+ \theta_1(z)+z_4\l( \xi_0+\theta_0 (z)\r) \r\},\\
&F^0(z)=\Span_{\CC}\l\{\xi_1 + \theta_1(z), \xi_0 +\theta_0(z)\r\}
\end{align*}
where
\begin{align*}
\theta_0(z)= \begin{pmatrix}
z_1\\z_2\\0\\0
\end{pmatrix},\quad \theta_1(z)=
\begin{pmatrix}
z_2\\z_3\\0\\0
\end{pmatrix}.
\end{align*}
Then we have an open neighborhood $\Delta^5\hookrightarrow \check{D}\times \CC$ by $(z_1,\ldots ,z_5)\mapsto (F(z_1,\ldots ,z_4),z_5)$.
Here
\begin{align*}
(F(z_1,\ldots ,z_4),z_5)\in E_{\sigma}\Leftrightarrow \begin{cases}z_4=0&\text{if }z_5=0,\\ \exp{(\ell (z_5)N)}F(z_1,\ldots ,z_4)\in D&\text{if }z_5\neq 0.\end{cases}
\end{align*}
For $z_5\neq 0$, we write
\begin{align*}
&\eta_0(z)=e^{\ell (z_5)N}(\xi_0+\theta_0 (z))=e^{\ell (z_5)N}\xi_0+\theta_0 (z),\\
&\eta_1(z)=e^{\ell (z_5)N}(\xi_1+\theta_1 (z))+z_4\eta_0(z)=\xi_1 +\theta_1 (z)+z_4\eta_0(z).
\end{align*}
Then
\begin{align*}
&e^{\ell(z_5)N}F^1(z)=\Span_{\CC}\{\eta_1(z)\},\quad e^{\ell(z_5)N}F^0(z)=\Span_{\CC}\{\eta_1(z),\eta_0 (z)\}.
\end{align*}
If $z_1,\ldots ,z_5\to 0$ provided that $|z_4|\log{|z_5|}\to 0$, we then have the convergences
\begin{align}\label{conv-ev}
&\eta_1(z)\to \xi_1, \quad\eta_0(z)-e^{\ell (z_5)N}\xi_0\to 0\\
&\langle\overline{\eta_1(z)}, \eta_1(z)\rangle \to \langle \overline{\xi_1} ,\xi_1 \rangle,\quad \langle \overline{\eta_1(z)},\eta_0(z)\rangle -\langle \overline{\xi_1},e^{\ell(z_5)N}\xi_0\rangle \to 0.\nonumber
\end{align}
Here
$$e^{\ell (z_5)N}F^1=\Span_{\CC}\{\xi_1\},\quad e^{\ell (z_5)N}F^0=\Span_{\CC}\{\xi_1,e^{\ell(z_5)N}\xi_0\}.$$
Then 
$$e^{\ell(z_5)N}F(z)-e^{\ell (z_5)N}F\to 0$$
if $z_1,\ldots ,z_5\to 0$ provided that $|z_4|\log{|z_5|}\to 0$.
Therefore, by the conditions (\ref{inD}) of $D$,
$$U_n(\delta_n)=\left\{\begin{array}{l|l}(z_1,\ldots ,z_5)\in\Delta^5&\begin{array}{ll}|z_4|^n<|z_5|, \\ |z_4|,|z_5|<\delta_n, \; z_5\neq 0\end{array}\end{array}\right\}\subset E_{\sigma}$$
if $\delta_n$ is sufficiently small for $n\geq 1$.
Here $U_n(\delta_n)$ is a small neighborhood of $\{e^{\ell(z_5)N}F\; |\;|z_5|< \delta_n\}$.
By Example \ref{str-top} and the definition of strong topology,
$$U(\delta)=\l(\bigcup_n U_n(\delta_n)\r)\sqcup \{z\in \Delta^5\; |\; z_4=z_5=0\}$$
is an open neighborhood of $(0,F)$ in the strong topology of $E_{\sigma}$ taking $\delta=\{\delta_n\}$ and $\Delta$ sufficiently small.

Next we show $p^{\ev}(e^{\ell(z_5)N}F(z))$ approaches $e^{\ell(z_5)N}\tilde{F}$ if $z_1,\ldots ,z_5\to 0$ provided that $z\in U_n(\delta_n)$, i.e.  $|z_4|\log{|z_5|}\to 0$.
For the Hodge decomposition for $e^{\ell (z_5)N}F(z)$ with $(z_1,\ldots ,z_5)\in U_n(\delta_n)$, the $(1,-2)$-component is generated by $\eta_1(z)$ and the $(0,-1)$-component is generated by
$$\alpha(z)=\langle \overline{\eta_1(z)},\eta_0(z)\rangle\; \eta_1(z)-\langle \overline{\eta_1(z)},\eta_1(z)\rangle\;\eta_0(z)$$
since $\overline{\eta_1(z)}\perp\alpha(z)$.
Then 
$$p^{\ev}(e^{\ell (z_5)N}F(z))=\Span_{\CC}\{\overline{\eta_1(z)}, \alpha (z)\}.$$
We write
$$\beta(z_5)=\langle \overline{\xi_1},e^{\ell(z_5)N}\xi_0\rangle\; \xi_1-\langle \overline{\xi_1},\xi_1\rangle\;e^{\ell (z_5)}\xi_0,$$
which is in the $(0,-1)$-component of the Hodge decomposition of $e^{\ell(z_5)N}F$.
Then, by Proposition \ref{p-tilde}, 
$$\Span_{\CC}\{\overline{\xi_1}, \beta(z_5) \}=p^{\ev}(e^{\ell(z_5)N}F)=e^{\ell(z_5)N}\tilde{F}.$$
By the convergence (\ref{conv-ev}) we have 
$$\eta_1(z)\to \xi_1,\quad \alpha(z)-\beta(z_5)\to 0$$ 
if $z_1,\ldots ,z_5\to 0$ provided that $z\in U_n(\delta_n)$, moreover we have
$$p^{\ev}(e^{\ell (z_5)N}F(z))-e^{\ell(z_5)N}\tilde{F}\to 0$$
in the upper half space. 
Then $U(\delta)$ is contained in the neighborhood (\ref{op}) if $\delta_n$ is sufficiently small.
\end{proof}
\begin{cor}
$p^{\ev}$ is continuous.
\end{cor}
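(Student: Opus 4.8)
The plan is to obtain the continuity of $p^{\ev}$ as a formal consequence of the continuity of $\tilde{p}^{\ev}$ just established in Proposition \ref{ev-cont}, using the factorization $p^{\ev}=\zeta\circ\tilde{p}^{\ev}$ furnished by the preceding factorization theorem (and recorded in the commutative diagram for the even case). Since $p^{\ev}$ is literally the composite of $\tilde{p}^{\ev}$ with $\zeta$, it suffices to check that both factors are continuous for the relevant topologies, after which continuity of $p^{\ev}$ is immediate.

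First I would invoke Proposition \ref{ev-cont}, which gives that $\tilde{p}^{\ev}:\Gamma\bs D_{\Sigma_{\ev}}\to\Gamma\bs\scrH_{\Sigma_{\ev}}$ is continuous, the source carrying the strong topology of the Kato-Usui space. Next I would record that $\zeta:\Gamma\bs\scrH_{\Sigma_{\ev}}\to\Gamma\bs\scrH_{\Sat}$ is the map from the toroidal partial compactification of the Siegel space to its Satake compactification, whose continuity is the classical statement of \cite[\S 6]{CCK} already used in Example \ref{siegel-exm}. As in Remark \ref{rem-ev}, one may if necessary first embed $\Gamma\bs\scrH_{\Sigma_{\ev}}$ as an open part of the toroidal compactification $\Gamma\bs\scrH_{\Sigma_{\tr}}$ of the degree-$2$ Siegel space, on which $\zeta$ is defined; this embedding is continuous, so no new difficulty arises.

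With both factors continuous, the composite $p^{\ev}=\zeta\circ\tilde{p}^{\ev}$ is continuous, which is the assertion. The only point that is not purely formal is the continuity of $\zeta$ itself, but this is exactly the content of the cited comparison of toroidal and Satake compactifications in \cite{CCK}, so I would treat it as known. In other words there is no genuine obstacle here: the corollary reduces to Proposition \ref{ev-cont} together with the factorization $p^{\ev}=\zeta\circ\tilde{p}^{\ev}$ and the classical continuity of $\zeta$.
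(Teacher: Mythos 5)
Your argument is correct and is essentially the paper's intended one: the corollary is stated without proof precisely because it follows formally from the factorization $p^{\ev}=\zeta\circ\tilde{p}^{\ev}$ established in the preceding theorem, the continuity of $\tilde{p}^{\ev}$ from Proposition \ref{ev-cont}, and the classical continuity of $\zeta$ from \cite{CCK}. Nothing further is needed.
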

\begin{rem}\label{rem-ev}
The $G_{\ZZ}(\BB_{S}(\sigma))$-admissible polyhedral decomposition (\cite[Definition 6.1]{CCK}) is the fan $\{\sigma,\{0\}\}$.
Then for a $\Gamma$-admissible decomposition $\Sigma_{\tr}$ (\cite[Definition 6.2]{CCK}), we have an injection $\Sigma_{\ev}\hookrightarrow \Sigma_{\tr}$.
Therefore we have the following commutative diagram:
\begin{align*}
\xymatrix{
\Gamma\bs D_{\Sigma_{\ev}}\ar@{->}[dr]_{p^{\ev }}\ar@{->}[r]^{\tilde{p}^{\ev}} & \Gamma\bs \scrH_{\Sigma_{\ev}}\ar@{->}[d]\ar@{->}[r]^{\subset}& \Gamma\bs \scrH_{\Sigma_{\tr}}\ar@{->}[ld]^{\zeta}\\
&\Gamma\bs\scrH_{\Sat}
}
\end{align*}
\end{rem}

\subsection{Boundaries for type-II}
Let $\sigma_m=\RR_{\geq 0}N_m$ be the type-II nilpotent cone with $N_me_3=-e_1$ and $N_me_4\mapsto -me_2$ where $m$ is a square-free positive integer.
The LMHS of this type is described as the following diagram:
$$\xymatrix{
\stackrel{(1,-1)}{\bullet}\ar@{->}[dd]^N&& \stackrel{(-1,1)}{\bullet}\ar@{->}[dd]^N\\ \\\stackrel{(0,-2)}{\bullet}&& \stackrel{(-2,0)}{\bullet}
}$$
Then this nilpotent cone is odd-type.
We define
$$\Sigma_{\od}=\{\Ad{(g)}\sigma_m\; |\; g\in G_{\ZZ}, \;m: \text{square-free positive integer}\}.$$
Then $\Sigma_{\od}$ is the fan of all nilpotent cones of odd-type by \cite[\S 12.3]{KU}.

Let $N=N_m$ and $\sigma=\sigma_m$.
We write
\begin{align*}
\xi _0^{\pm}=\begin{pmatrix}-1\\ \pm i\sqrt{m}\\ 0\\0\end{pmatrix},\quad \xi _1^{\pm}(w)=\begin{pmatrix}0\\w\\ \pm i\sqrt{m} \\1\end{pmatrix}.
\end{align*}
Then for $z\in \CC$
\begin{align*}
N\xi_1^{\pm}(w)=\pm i\rt-m\xi_0^{\pm},\quad \xi_1^{\pm}(w)\pm zi\rt-m \xi _0^{\pm}=\exp{(zN)}\xi _1^{\pm}(w). 
\end{align*}
We define a filtration $F_{\pm}(w)\in\check{D}$ by
\begin{align*}
F^1_{\pm}(w)=\Span_{\CC}\{\xi_{1}^{\pm}(w)\},\quad F^0_{\pm}(w)=\Span_{\CC} \{ \xi_1^{\pm}(w),\xi_0^{\pm}\}.
\end{align*}
Then
$$\BB(\sigma)=\{(\sigma ,\exp{(\CC N)}F_{+}(w))\; |\;w\in \CC\}\sqcup\{(\sigma ,\exp{(\CC N)}F_{-}(w))\; |\;w\in \CC\}.$$

Let $F=F_{\pm}(w)$ and $\sigma=\RR_{\geq 0}N$.
Then the $(1,-1)$-component of the Deligne decomposition of $(W(N),F)$ is generated by $\xi_1^{\pm}(w)$, and the $(-1,1)$-component is
$$F^{-1}\cap(\bar{F}^1+\bar{F}^0\cap W_{-2}(N))=F^{-1}\cap(\bar{F}^1+N\bar{F}^1),$$
which is generated by
\begin{align*}
\omega&=\langle \xi_1^{\pm}(w),\overline{\xi_0^{\pm}}\rangle\; \overline{\xi_1^{\pm}(w)}- \langle \xi_1^{\pm}(w),\overline{\xi_1^{\pm}(w)}\rangle\; \overline{\xi_0^{\pm}}\\
&=\mp 2i\rt-m\;\overline{\xi^{\pm}_1(w)}+2i\Im{w}\;\overline{\xi^{\pm}_0}\\
&=\mp2i\rt-m\exp{\l(-\frac{\Im{w}}{m}iN\r)}\;\overline{\xi_1^{\pm}(w)}.
\end{align*}
The $\RR$-split MHS $(W(N),\hat{F})$ associated to $(W(N),F)$ is given by
$$\hat{F}=\exp{\l(\frac{\Im{w}}{2m}iN\r)}F.$$
In fact, the $(1,-1)$-component is generated by
\begin{align*}
\hat{\xi}=\exp{\l(\frac{\Im{w}}{2m}iN\r)}\xi^{\pm}_1(w)
\end{align*}
and the $(-1,1)$-component is generated by
\begin{align*}
&\hat{\omega}=\exp{\l(\frac{\Im{w}}{2m}iN\r)}\omega= \mp2i\rt-m\exp{\l(-\frac{\Im{w}}{2m}iN\r)}\;\overline{\xi_1^{\pm}(w)}. 
\end{align*}
Then the Hodge decomposition for $\exp{(iN)}\hat{F}\in D$ is given by
\begin{align*}
u_1=\exp{(iN)}\hat{\xi}\in H^{1,-2},\quad \bar{u}_0=\exp{(iN)}\hat{\omega}\in H^{-1,0}.
\end{align*}
Here $H^{\od}$ is generated by $u_1$ and $\bar{u}_0$, and by (\ref{exp-X})
\begin{align*}
\exp{(X)}u_1=2iN\hat{\xi},\quad \exp{(X)}\bar{u}_0=2iN\hat{\omega}.
\end{align*}
Then
\begin{align*}
p^{\od}(\sigma ,\exp{(\sigma_{\CC})}F)=\exp{(X)}H^{\od}=\Span_{\CC}\{e_1,e_2\} ,
\end{align*}
which is contained in $\overline{\BB _S(\sigma)}$.
Moreover the map $p^{\od}:\BB (\sigma)\to \overline{\BB _S (\sigma)}$ is surjective.

Now $\tilde{F}\in \check{\scrH}$ is given by $\tilde{F}^0=\Span_{\CC}\{\hat{\xi},\hat{\omega}\}$.
By Proposition \ref{p-tilde}, for $\Im{z}>0$
\begin{align*}
p^{\odd}(\exp{(zN)}F)&=\exp{(zN)}\tilde{F}=\begin{pmatrix}-z&\pm\frac{\Im{w}}{2\sqrt{m}}\cr \pm\frac{\Im{w}}{2\sqrt{m}}&\Re{w}-mz\end{pmatrix}.
\end{align*}
Since $\exp{(\sigma_{\CC})}F=\exp{(\sigma_{\CC})}\hat{F}$, we have
$$\tilde{p}^{\od}(\sigma ,\exp{(\sigma_{\CC})}F)=(\sigma ,\exp{(\sigma_{\CC})}\tilde{F}).$$
Here $\tilde{p}^{\od}$ is {\it not} surjective. 

\begin{prop}
Let $\Gamma$ be a subgroup of $G_{\ZZ}$ which is compatible with $\Gamma$ and satisfies the condition (\ref{str-cpt}) for any $\sigma\in\Sigma_{\od}$.
Then $\tilde{p}^{\od}:\Gamma\bs D_{\Sigma_{\od}}\to\overline{\Gamma\bs \scrH}_{\Sigma_{\od}}$ is continuous.
\end{prop}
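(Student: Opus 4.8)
The plan is to mirror the proof of Proposition \ref{ev-cont}, making only the substitutions forced by the type-II data. As there, it suffices to verify continuity near a single boundary point $(\sigma,\exp{(\sigma_{\CC})}F)$ with $F=F_{\pm}(w)$, and I may assume $F=\hat{F}$ (that is, $\Im{w}=0$); the case $F\neq\hat{F}$ follows by the same argument after translating by $\exp{\l(\frac{\Im{w}}{2m}iN\r)}$. First I would write down an explicit neighborhood of the target point $(\sigma,\exp{(\sigma_{\CC})}\tilde{F})$ in $\overline{\Gamma\bs\scrH}_{\Sigma_{\od}}$, exactly analogous to (\ref{op}), using the chart $\tilde{F}(z')=\tilde{F}+\l(\begin{smallmatrix}z'_1&z'_2\\z'_2&z'_3\end{smallmatrix}\r)$ together with the coordinate $z'_4$ along $\exp{(\sigma_{\CC})}$.

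Next I would set up the source neighborhoods. Using the open immersion (\ref{D_check_dsc}) I would build a coordinate chart $\Delta^4\hookrightarrow\check{D}$ around $F_{\pm}(w)$ by perturbing the spanning vectors $\xi_1^{\pm}(w)$ and $\xi_0^{\pm}$, and then $\Delta^5\hookrightarrow\check{D}\times\CC$ around $(F,0)$ with the extra coordinate $z_5$ governing the torsor $E_{\sigma}\to\gsds$ via $\ell(z_5)=\log{z_5}/2\pi i$. The membership condition reduces to $z_4=0$ when $z_5=0$ and to $\exp{(\ell(z_5)N)}F(z)\in D$ when $z_5\neq 0$, which by the positivity conditions (\ref{inD}) holds on slit sets $U_n(\delta_n)$ built as in Example \ref{str-top}, with $|z_4|^n<|z_5|$ forcing $|z_4|\log{|z_5|}\to 0$.

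The analytic core is a single convergence estimate. Writing $\eta_1(z)$ and $\eta_0(z)$ for the images under $e^{\ell(z_5)N}$ of the perturbed spanning vectors, I would show that as $z_1,\ldots,z_5\to 0$ subject to $|z_4|\log{|z_5|}\to 0$ one has $\eta_1(z)\to\xi_1^{\pm}(w)$ and $\eta_0(z)-e^{\ell(z_5)N}\xi_0^{\pm}\to 0$, together with the corresponding convergence of the pairings $\langle\overline{\eta_1(z)},\eta_1(z)\rangle$ and $\langle\overline{\eta_1(z)},\eta_0(z)\rangle$. From these I would read off the odd part $H^{\od}$ of the Hodge decomposition of $e^{\ell(z_5)N}F(z)$, generated by $\eta_1(z)$ and by the Gram--Schmidt combination $\alpha(z)=\langle\overline{\eta_1(z)},\eta_0(z)\rangle\,\eta_1(z)-\langle\overline{\eta_1(z)},\eta_1(z)\rangle\,\eta_0(z)$, and compare it with the value $e^{\ell(z_5)N}\tilde{F}$ furnished by Proposition \ref{p-tilde}, concluding $\tilde{p}^{\od}(e^{\ell(z_5)N}F(z))-e^{\ell(z_5)N}\tilde{F}\to 0$. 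Hence $\tilde{p}^{\od}$ carries a suitable strong-topology neighborhood $U(\delta)$ into (\ref{op}), which gives the continuity.

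The main obstacle I anticipate is twofold. First, since $\tilde{p}^{\od}$ lands in the conjugate Siegel space $\bar{\scrH}$ and the relevant Hodge slot is the one occupied by $\bar{u}_0=\exp{(iN)}\hat{\omega}$ rather than the $u_0$-slot used in the even case, I must track carefully where complex conjugation enters the vectors $\xi_0^{\pm},\xi_1^{\pm}(w)$ and the branch of $\ell(z_5)$. Second, and more essentially, the type-II cone has $\dim{(\Im{N})}=2$, so unlike the type-I matrix $\l(\begin{smallmatrix}z-\gamma i\Im{w}&\bar{w}\\ \bar{w}&\bar{v}\end{smallmatrix}\r)$ in which only one diagonal entry degenerates, here the target $\l(\begin{smallmatrix}-z&\pm\frac{\Im{w}}{2\sqrt{m}}\\ \pm\frac{\Im{w}}{2\sqrt{m}}&\Re{w}-mz\end{smallmatrix}\r)$ degenerates in both diagonal directions simultaneously. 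I therefore expect the delicate point to be showing that the single slit constraint $|z_4|\log{|z_5|}\to 0$ controls the convergence along the full two-dimensional isotropic direction $W(\sigma)_{-2}=\Span_{\CC}\{e_1,e_2\}$; once the estimate $\eta_0(z)-e^{\ell(z_5)N}\xi_0^{\pm}\to 0$ is secured, the remaining steps are formally identical to the even-type proof.
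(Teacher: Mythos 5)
Your overall strategy is the right one and matches the paper's, but the transplant of the type-I slit structure to type-II breaks down at the decisive point, and you have in fact flagged the wrong spot as the ``delicate point.'' For the type-II cone one has $N\xi_0^{\pm}=0$ and $N\xi_1^{\pm}(w)=\pm i\sqrt{m}\,\xi_0^{\pm}\neq 0$, i.e.\ the roles of $\xi_0$ and $\xi_1$ relative to $N$ are the opposite of the type-I case. Consequently the membership condition for $(F(z_1,\dots,z_4),z_5)\in E_{\sigma}$ at $z_5=0$ is \emph{not} $z_4=0$ but $z_1=z_2=0$: it is the perturbation coordinates $z_1,z_2$ (which introduce $e_3$-components into $\xi_0+\theta_0(z)$ and $\xi_1+\theta_1(z)$, on which $e^{\ell(z_5)N}$ blows up) that must vanish on the boundary, while $z_4$ is harmless. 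Your proposed neighborhoods $U_n(\delta_n)$ cut out by $|z_4|^n<|z_5|$ therefore contain points with $z_5=0$ and $z_1\neq 0$ or $z_2\neq 0$ that do not lie in $E_{\sigma}$ at all, so they cannot serve as a neighborhood basis in the strong topology. The correct sets are the doubly-indexed $U_{n,m}(\delta_{n,m})$ with the two slit conditions $|z_1|^n<|z_5|$ and $|z_2|^m<|z_5|$, and the convergence hypothesis is $|z_1|\log{|z_5|}\to 0$ and $|z_2|\log{|z_5|}\to 0$, not $|z_4|\log{|z_5|}\to 0$.

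The same swap infects your convergence claims: since $e^{\ell(z_5)N}\xi_1^{\pm}(w)=\xi_1^{\pm}(w)\pm\ell(z_5)i\sqrt{m}\,\xi_0^{\pm}$ diverges as $z_5\to 0$, the statement $\eta_1(z)\to\xi_1^{\pm}(w)$ is false; what holds is $\eta_1(z)-e^{\ell(z_5)N}\xi_1\to 0$ together with $\eta_0(z)\to\xi_0$, the mirror image of the type-I pattern. Your closing worry --- that the single constraint on $z_4$ must be shown to control convergence along the two-dimensional isotropic direction $W(\sigma)_{-2}$ --- is resolved not by a harder estimate but by recognizing that the slit itself is two-dimensional (conditions on both $z_1$ and $z_2$), which is exactly how the extra degeneration direction is accounted for. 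With the slit structure and the roles of $\xi_0,\xi_1$ corrected, the rest of your outline (Gram--Schmidt combination $\alpha(z)$, comparison with $e^{\ell(z_5)N}\tilde{F}$ via Proposition \ref{p-tilde}) does go through as in the even case.
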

\begin{proof}
As in the proof of Proposition \ref{ev-cont}, it is sufficient to show the continuity around the boundary point $(\sigma,\exp{(\sigma_{\CC})}F_{\pm}(w))$ in $\Gamma\bs D_{\Sigma_{\od}}$.
We write 
$$F=\hat{F}_{\pm}(w),\quad \xi_0=\xi_0^{\pm},\quad \xi_1=\xi^{\pm}_1(w),$$
and we assume $F=\hat{F}$, i.e. $\Im{w}=0$ (it is similar to show the continuity for $F\neq \hat{F}$ and we omit the proof of it).
We describe neighborhoods of $(\sigma,\exp{(\sigma_{\CC})}F)$ of $\Gamma\bs D_{\Sigma_{\od}}$, and show the continuity.

A neighborhood of a boundary point $(\sigma ,\exp{(\sigma_{\CC})}F)$ in $\Gamma\bs D_{\sigma}$ is given by $E_{\sigma}$ and the map $\phi: E_{\sigma}\to \Gamma \bs D_{\Sigma_{\od}}$.
It is sufficient to show $\tilde{p}^{\od}\circ\phi$ is continuous.
We describe a neighborhood of $(F,0)\in E_{\sigma}\subset \check{D}\times \CC$.
An open neighborhood of $F$ in $\check{D}$ is given by
\begin{align*}
&\Delta^4 \hookrightarrow \check{D};\quad (z_1,\ldots ,z_4)\mapsto F(z)\\
&\text{where }F^1(z)=\Span_{\CC}\l\{\xi_1+ \theta_1(z)+z_4\l( \xi_0 +\theta_0 (z)\r) \r\},\\
&F^0(z)=\Span_{\CC}\l\{\xi_1+ \theta_1(z), \xi_0 +\theta_0(z)\r\}
\end{align*}
where
\begin{align*}
\theta_0(z)= \begin{pmatrix}
0\\z_2\\z_1\\0
\end{pmatrix},\quad \theta_1(z)=
\begin{pmatrix}
0\\z_3\\z_2\\0
\end{pmatrix}.
\end{align*}
Then we have an open neighborhood $\Delta^5\hookrightarrow \check{D}\times \CC$ by $(z_1,\ldots ,z_5)\mapsto (F(z_1,\ldots ,z_4),z_5)$.
Here
\begin{align*}
(F(z_1,\ldots ,z_4),z_5)\in E_{\sigma}\Leftrightarrow \begin{cases}z_1=z_2=0&\text{if }z_5=0,\\ \exp{(\ell (z_5)N)}F(z_1,\ldots ,z_4)\in D&\text{if }z_5\neq 0.\end{cases}
\end{align*}
For $z_5\neq 0$, we write
\begin{align*}
&\eta_0(z)=e^{\ell (z_5)N}(\xi_0+\theta_0 (z))=\xi_0+e^{\ell (z_5)N}\theta_0 (z),\\
&\eta_1(z)=e^{\ell (z_5)N}(\xi_1+\theta_1 (z))+z_4\eta_0 (z).
\end{align*}
Then
\begin{align*}
&e^{\ell(z_5)N}F^1(z)=\Span_{\CC}\{\eta_1(z)\},\quad e^{\ell(z_5)N}F^0(z)=\Span_{\CC}\{\eta_1(z),\eta_0 (z)\}.
\end{align*}
If $z_1,\ldots ,z_5\to 0$ provided that $|z_1|\log{|z_5|}\to 0$ and $|z_2|\log{|z_5|}\to 0$, then we have
\begin{align}\label{conv-od}
&\eta_1(z)-e^{\ell (z_5)N}\xi_1\to 0,\quad \eta_0(z)\to\xi_0,\\
&\langle\overline{\eta_1(z)}, \eta_1(z)\rangle- \langle \overline{e^{\ell (z_5)N}\xi_1},e^{\ell (z_5)N}\xi_1\rangle\to 0,\nonumber\\
&\langle\overline{\eta_1(z)}, \eta_0(z)\rangle\to \langle \overline{e^{\ell (z_5)N}\xi_1},\xi_0\rangle=\langle \overline{\xi_1}, \xi_0\rangle, \nonumber
\end{align}
Here
$$e^{\ell (z_5)N}F^1=\Span_{\CC}\{e^{\ell (z_5)N}\xi_1\},\quad e^{\ell (z_5)N}F^0=\Span_{\CC}\{e^{\ell (z_5)N}\xi_1,\xi_0\}.$$
Then 
$$e^{\ell(z_5)N}F(z)-e^{\ell (z_5)N}F\to 0$$
if $z_1,\ldots ,z_5\to 0$ provided that $|z_1|\log{|z_5|}\to 0$ and $|z_2|\log{|z_5|}\to 0$.
Therefore, by the conditions (\ref{inD}) of $D$,
$$U_{n,m}(\delta_{n,m})=\left\{\begin{array}{l|l}(z_1,\ldots ,z_5)\in\Delta^5&\begin{array}{ll}|z_1|^n<|z_5|, \;|z_2|^m<|z_5|,\\|z_1|,|z_2|,|z_5|<\delta_{n,m},\;   z_5\neq 0\end{array}\end{array}\right\}\subset E_{\sigma}$$
if $\delta_{n,m} $ and $\Delta$ is sufficiently small.
By Example \ref{str-top} and the definition of strong topology,
$$U(\delta)=\l(\bigcup_{n,m}U_{n,m}(\delta_{n,m})\r)\sqcup \{z\in \Delta^5\; |\; z_1=z_2=z_5=0\}$$
is an open neighborhood of $(F,0)$ in $E_{\sigma}$ for $\delta=\{\delta_{n,m}\}$.

As in the proof of the case for even-types, we can show that
$$p^{\od}(e^{\ell(z_5)N}F(z))-e^{\ell(z_5)N}\tilde{F}\to 0$$
if $z_1,\ldots ,z_5\to 0$ provided that $z\in U_{n,m}(\delta_{n,m})$.
Then $\tilde{p}^{\od}\circ \phi(U(\delta))$ is contained in the open neighborhood (\ref{op}) if $\delta_{n,m}$ is sufficiently small.
\end{proof}
\begin{cor}
$p^{\od}$ is continuous.
\end{cor}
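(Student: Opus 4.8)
The plan is to reduce the statement to the continuity of $\tilde{p}^{\od}$, which was just established in the preceding Proposition, by exploiting the factorization of $p^{\od}$ through $\tilde{p}^{\od}$. Recall that $p^{\od}$ decomposes as $p^{\od}=\bar{\zeta}\circ\tilde{p}^{\od}$, where $\bar{\zeta}:\overline{\Gamma\bs\scrH}_{\Sigma_{\od}}\to\overline{\Gamma\bs\scrH}_{\Sat}$ is the conjugate of the map from the toroidal compactification to the Satake compactification furnished by \cite{CCK}. Since a composite of continuous maps is continuous, it suffices to check that each of the two factors is continuous in the relevant topologies.

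First I would invoke the preceding Proposition directly: it asserts exactly that $\tilde{p}^{\od}:\Gamma\bs D_{\Sigma_{\od}}\to\overline{\Gamma\bs\scrH}_{\Sigma_{\od}}$ is continuous, where the source carries the strong topology of Kato--Usui and the target carries the toroidal topology. Next I would recall that $\bar{\zeta}$ is continuous: this is the classical fact from \cite[\S 6]{CCK} that the canonical map from a toroidal compactification of a Siegel modular variety to its Satake compactification is continuous (indeed a morphism of complex analytic spaces), a property that is preserved under passing to complex conjugates. Composing these two continuous maps yields the continuity of $p^{\od}$, which is the assertion.

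I do not expect any genuine obstacle here, since all of the analytic content has already been absorbed into the proof of the Proposition. The only point meriting a word of care is to confirm that the three topologies line up along the factorization---the strong topology on $\Gamma\bs D_{\Sigma_{\od}}$, the toroidal topology on $\overline{\Gamma\bs\scrH}_{\Sigma_{\od}}$, and the Satake topology on $\overline{\Gamma\bs\scrH}_{\Sat}$---but this is automatic once one reads off the source and target topologies from the Proposition and from \cite{CCK} respectively. Thus the corollary follows at once by composition, exactly as in the even-type case.
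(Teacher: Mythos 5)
Your proposal matches the paper's (implicit) argument exactly: the paper derives this corollary from the preceding Proposition via the factorization $p^{\od}=\bar{\zeta}\circ\tilde{p}^{\od}$ established in the theorem that $p^{\od}$ factors through $\tilde{p}^{\od}$, together with the continuity of $\bar{\zeta}$ from \cite{CCK}. Nothing is missing; this is the same reasoning used for the even-type corollary.
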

\begin{rem}
A $G_{\ZZ}(\BB_{S}(\sigma))$-admissible decomposition $\Sigma$ does not contain $\sigma$ in this case.
Then we do not have a diagram like the one in Remark \ref{rem-ev}.
\end{rem}


\begin{thebibliography}{ABCD}
\bibitem[AMRT]{AMRT} A. Ash, D. Mumford, M. Rapoport and Y. S. Tai, \it{Smooth compactification of locally symmetric varieties}, \em{Math.\ Sci.\ Press, Brookline}, 1975.
\bibitem[CCK]{CCK} J. A. Carlson, E. Cattani and A. Kaplan, {\it Mixed Hodge structures and compactification of Siegel's space}, {\rm in Journ\'ees de g\'eom\'etrie alg\'ebrique d'Angers 1979 (A. Beauville, ed.), Sijthohoff \& Nordhoff}, 1980, 77--105.
\bibitem[CK]{CK}E. Cattani and A. Kaplan, {\it Polarized mixed Hodge structures and the local monodromy of a variation of Hodge structure}, Invent. Math. {\bf 67} (1982), no. 1, 101--115.
\bibitem[CKS]{CKS}E. Cattani, A. Kaplan and W. Schmid, {\it Degeneration of Hodge structures}, Ann. of Math. {\bf 123} (1986), 457--535.
\bibitem[CMP]{CMP}J. Carlson, S. M\"{u}ller-Stach and C. Peters, {\it Period mappings and period domains}, Cambridge Studies in Advanced Mathematics, 85. Cambridge University Press, Cambridge, 2003. 
\bibitem[FHW]{FHW} G. Fels, A. Huckleberry and J. A. Wolf {\it Cycle Spaces of Flag Domains: A Complex Geometric Viewpoint}, {\rm Progress in Mathematics, 245. Birkhauser Boston, Inc., 2006.}
\bibitem[G]{G} P. Griffiths, {\it Periods of integrals on algebraic manifolds. I. Construction and properties of the modular varieties}, {\rm Amer. J. Math. {\bf 90} (1968) 568--626.}
\bibitem[GGK1]{GGK} M. Green, P. Griffiths and M. Kerr, {\it N\'eron models and boundary components for degenerations of Hodge structures of mirror quintic type}, {\rm in {\it Curves and Abelian Varieties} (V. Alexeev, Ed.), Contemp.\ Math {\bf 465} (2007), AMS, 71--145.}
\bibitem[GGK2]{GGK2}M. Green, P. Griffiths and M. Kerr, {\it  Mumford-Tate groups and domains: their geometry and arithmetic}, {\rm Annals of Math Studies, {\bf 183}. Princeton University Press, 2012.}
\bibitem[H]{H} T. Hayama, {\it On the boundary of the moduli spaces of log Hodge structures, II: non-trivial torsors}, {arXiv:1110.3167}
\bibitem[KU]{KU} K. Kato and S. Usui, {\it Classifying space of degenerating polarized Hodge structures}, {\rm  Annals of Mathematics Studies, {\bf 169}. Princeton University Press, Princeton, NJ, 2009.}
\bibitem[KP]{KP} M. Kerr and G. Pearlstein, {\it  Boundary components of Mumford-Tate domains}, {\rm preprint}.
\bibitem[N]{N} Y. Namikawa, {\it Toroidal compactification of Siegel spaces}, {\rm Lecture Notes in Math} {\bf 812}, {\rm Springer-Verlag}, 1980.
\bibitem[S]{S} W. Schmid, {\it Variation of Hodge structure: the singularities of the period mapping}, {\rm Invent.\ Math.\ {\bf 22} (1973), 211--319.}
\end{thebibliography}
\end{document}